\newcommand{\mO}{{\mathcal{O}}}
\newcommand{\beq}{\begin{equation}}
\newcommand{\eeq}{\end{equation}}
\newcommand{\bey}{\begin{eqnarray}}
\newcommand{\eey}{\end{eqnarray}}
\begin{document}

\title{Robust fast method for variable-order time-fractional diffusion equations without regularity assumptions \thanks{This work is supported in part by research grants of the Science and Technology Development Fund, Macau SAR (file no. 0122/2020/A3), and University of Macau (file no. MYRG2020-00224-FST).}}

\author{Jia-li Zhang$^1$ \and Zhi-wei Fang$^2$ \and Hai-wei Sun$^3$
}


\institute{1 \at
              Department of Mathematics, University of Macau,
Macao\\
\email{zhangjl2628@163.com}\\
           2 \at
              Department of Mathematics, University of Macau,
Macao\\
\email{fzw913@yeah.net}\\
3 \at
              Corresponding Author. Department of Mathematics, University of Macau,
Macao\\
\email{HSun@um.edu.mo}
}

\date{Received: date / Accepted: date}

\maketitle

\begin{abstract}
In this paper, we develop a robust fast method for mobile-immobile variable-order (VO) time-fractional diffusion equations (tFDEs), superiorly handling the cases of small or vanishing lower bound of the VO function. The valid fast approximation of the VO Caputo fractional derivative is obtained using integration by parts and the exponential-sum-approximation   method. Compared with the general direct method, the proposed algorithm ($RF$-$L1$ formula) reduces the acting memory from $\mathcal{O}(n)$ to $\mathcal{O}(\log^2 n)$ and computational cost from $\mathcal{O}(n^2)$ to $\mathcal{O}(n \log^2 n)$, respectively, where $n$ is the number of time levels. Then $RF$-$L1$ formula is applied to construct the fast finite difference scheme for the VO tFDEs, which sharp decreases the memory requirement and computational complexity. The error estimate for the proposed scheme is studied only under some assumptions of the VO function, coefficients, and the source term, but without any regularity assumption of the true solutions. Numerical experiments are presented to verify the effectiveness of the proposed method.
\keywords {variable-order Caputo fractional derivative \and exponential-sum-approximation method \and fast algorithm \and  convergence
}
\subclass{ 35R11, 65M06, 65M12}
\end{abstract}


\section{Introduction}
Fractional operators have been extensively studied due to their broad applications in both mathematics and physical science. Numerous researchers  revealed that the fractional calculus can better characterize complex phenomena in fields such as the biology, the ecology, the diffusion, and the control system \cite{Benson-2000,   Kilbas-2006,  Liu-2004, Mainardi-2000, Podlubny-1999, Raberto-2002}. In particular, the variable-order (VO) fractional operators are more efficient since many important dynamical problems exhibit the order of the fractional operator varying with time, space, or some other variables; see \cite{Lorenzo-2002, Sun-2019, Sun-2011}. Recently, the VO fractional derivatives have been widely applied to model phenomena in fields of science and engineering; see, for details, \cite{C, Coimbra-2003,  Diazand-2009, Ingman-2004, Jia-2017,  Obembe-2017, Pedro-2008,  Sokolov-2005, Sun-2009, Zhuang-2000}. In this paper, we consider the VO mobile-immobile time-fractional diffusion equations (tFDEs) \cite{ Fu-2019, Zheng-2020, Zheng-2019}

\begin{align}
&\frac {\partial}{\partial t}u(x,t)+\zeta \prescript{C}{0}{\mathcal{D}}^{\alpha(t)}_tu(x,t)=\frac \partial{\partial x}\bigg[p(x)\frac {\partial u(x,t)}{\partial x}\bigg]+f(x,t),\ \ x\in \Omega,\ \ t\in (0,T],\label{E1}\\
&u(x,0)=\varphi(x),\ \ x\in \overline{\Omega},\label{E2}\\
&u(x,t)=0,\ \ x\in \partial\Omega,\ \ t\in (0,T], \label{E3}
\end{align}
where $\Omega=(x_l,x_r)$, $\zeta>0$ is the mobile/immobile capacity coefficient, and $f(x,t)$ is the source term. Moreover, $f(x,t)$, $\varphi(x)$ and $p_*\leq p(x)\leq p^*$ are given sufficiently smooth functions. The VO Caputo fractional derivative is defined by
\cite{Coimbra-2003}
\begin{align}\label{caputo-def}
\prescript{C}{0}{\mathcal{D}}^{\alpha(t)}_tu(x,t):=\frac 1 {\Gamma\big(1-\alpha(t)\big)}\int_0^t\frac{u'(x,\tau)}{(t-\tau)^{\alpha(t)}}\mathrm{d}\tau,
\end{align}
where $0\leq\alpha_*\leq\alpha(t)\leq\alpha^*<1$ is the VO function depending on the time variable $t\in[0,T]$ and $\Gamma(\cdot)$ is the Gamma function. The VO tFDEs \eqref{E1}--\eqref{E3} describe the dynamic mass exchange between mobile and immobile phases and thus improve the modeling of anomalously diffusive transport \cite{Patnaik-2020, Sun-2009}.
Several papers have considered numerical methods for the VO tFDEs; see the references in \cite{Du-2020, Gu-2021, Sun-2012, Zhao-2015}. However, most papers ignored the possible presence of an initial layer in the solution near $t=0$ and presented convergence analyses that makes the unrealistic assumption that the true solution is smooth on the closed domain.
Nevertheless, it is well known that the solutions to tFDEs exhibit initial singularities that may affect the accuracy of the numerical approximations.
In \cite{Zheng-2020, Zheng-2019}, the authors showed that such singularity may not be physical relevant in the diffusion processes and could be eliminated in VO functional models by imposing the integer limit of the VO function at $t=0$. More precisely, the solution has full regularity like its integer-order analogue if the VO function has an integer limit at $t=0$; or exhibits singular behaviors at $t=0$ if the VO function has a non-integer value at $t=0$. Taking into account the initial behavior of the VO tFDEs, a fully discretized finite element approximation to \eqref{E1}--\eqref{E3} is developed and analyzed \cite{Zheng-2020}.

As a result of  the nonlocality of the fractional operators, using $L1$ formula \cite{ Langlands-2005, Liao-2018, Oldham-1974, Sun-2005, Zheng-2020} to discretize the VO Caputo fractional derivative is too expensive in storage and complexity, which requires $\mathcal{O}(n)$ storage and $\mathcal{O}(n^2)$ computational cost, where $n$ is the total number of the time levels.
The constant-order (CO) fractional operators also suffer from such difficulty. Hence many efforts have been made to speed up the evaluation of the CO Caputo fractional derivative \cite{Baffet-2016, Bertaccini-2019,  Fu-2017, Jiang-2017, Ke-2015, Lu-2015,   Lubich-2002, Zeng-2017}.
Nevertheless, the coefficient  matrices of the numerical schemes for the VO fractional problems lose the Toeplitz-like structure and the VO fractional derivative is no longer a convolution operator. Those fast methods for the CO fractional derivative cannot be directly applied to VO cases. Limited fast methods have been presented so far for the VO fractional derivative.
Recently, Fang, Sun and Wang \cite{Fang-2019} developed a fast algorithm for the VO Caputo fractional derivative based on a shifted binary block partition and uniform polynomial approximations.
In \cite{Zhang-2021}, a fast $L1$ formula ($F$-$L1$ formula) is proposed using the exponential-sum-approximation (ESA) technique for the VO Caputo fractional derivative. $F$-$L1$ formula achieves the optimal convergence and significantly reduces the storage requirement to $\mO(\log^2 n)$ and the computational cost to $\mathcal{O}(n\log^2 n)$, respectively.
However, a small value of $\alpha_*$ causes unreliable parameters in the fast algorithm, which leads to inaccurate approximations in the view of scientific computation. Even worse, $F$-$L1$ formula fails to approximate the VO fractional derivative with a vanishing $\alpha_*$ since $\alpha_*=0$ indicates an infinite parameter in the algorithm.

In this paper, to overcome the weakness mentioned above, we develop a valid method to accelerate the approximation of the VO Caputo fractional derivative. At the time level $t_k$, using integration by parts, we transform the singular kernel in \eqref{caputo-def} from $(t_k-\tau)^{-\alpha(t_k)}$ to $(t_k-\tau)^{-1-\alpha(t_k)}$. It makes sense that  $(t_k-\tau)^{-1-\alpha(t_k)}$ can be approached by a linear combination of exponentials based on the ESA technique, which supplies an opportunity to construct the fast algorithm.
Compared with $L1$ formula, it reduces the acting memory from $\mathcal{O}(n)$ to $\mO(\log^2n)$ and computational cost from $\mathcal{O}(n^2)$ to $\mathcal{O}(n\log^2 n)$. Significantly, although $RF$-$L1$ formula and $F$-$L1$ formula are both based on the ESA method, $RF$-$L1$ formula provides a powerful way to fast approximate the VO Caputo fractional derivative, which can attack the problem efficiently even if $\alpha_*=0$ while $F$-$L1$ formula cannot work.
Then $RF$-$L1$ formula is applied to construct a fast finite difference scheme ($RF$-$L1$ scheme) for the VO tFDEs, which sharp decreases the memory requirement and computational complexity. We present the optimal order convergence rate of the proposed scheme, assuming only the smoothness of the coefficients, source term, the VO function and the spatial domain, but not the regularity of the true solution.
The numerical experiments show that $RF$-$L1$ scheme achieves temporal first-order accuracy with less CPU time and memory than the existing methods. Specifically, $RF$-$L1$ scheme performs well for $\alpha_*=0$, $0.05$ while $F$-$L1$ scheme fails to solve the problem.

The structure of the paper is as follows. In Section \ref{Preliminaries}, we refer the regularity and well-posedness of the solution of \eqref{E1}--\eqref{E3} and recall $L1$ and $F$-$L1$ formulas. In Section \ref{fast-formula}, we propose $RF$-$L1$ formula for the VO Caputo fractional derivative based on the ESA technique. In Section \ref{finite-difference-scheme}, $RF$-$L1$ formula is applied to construct a fast difference scheme to solve the VO tFDEs \eqref{E1}--\eqref{E3}. The convergence of the scheme is analysed without the smoothness assumption of the true solution. In Section \ref{numerical-results}, numerical results are reported to demonstrate the efficiency of the proposed scheme. Concluding remarks are given in Section \ref{concluding-remarks}.

\section{Preliminaries}\label{Preliminaries}
\subsection{Well-posedness}
It is well known that the solutions to tFDEs exhibit initial singularities that may affect the accuracy of the numerical approximations. The regularity of all typical solutions of \eqref{E1}--\eqref{E3} is investigated in \cite{Zheng-2020, Zheng-2019}.

In this paper, we use the standard Hilbert space $L_2(\Omega)$ with norm $\|\cdot\|_{L_2}$ and inner product $(\cdot,\cdot)_{L_2}$. For convenience we may drop the subscript $L_2$ in $(\cdot,\cdot)_{L_2}$ and $\|\cdot\|_{L_2}$ as well as the notation $\Omega$ when no confusion occurs. Moreover, $c$ denotes generic positive constants that are independent of $T$ and   any mesh used to solve \eqref{E1}--\eqref{E3} numerically. Note that $c$ may be assumed to be different values at different occurrences. For the remainder of this paper, we make the following assumptions to ensure the essential feature of the solution.\\
\textbf{Condition A:} $\alpha(t)\in C[0,T]$, $0\leq\alpha(t)\leq\alpha^*<1$ on $[0,T]$, $\lim\limits_{t\rightarrow 0^+}\big(\alpha(t)-\alpha(0)\big)\ln t$ exists.\\
\textbf{Condition B:} $p(x)\in C^1(\overline{\Omega})$.

Next, we introduce some notations.
Let $\{(\xi_i, \phi_i ):i=1,2,\ldots\}$ be the eigenvalues and eigenfunctions for the Sturm-Liouville problem
\begin{align*}
&\mathcal{L} \phi_i(x)=\xi_i \phi_i(x),\ \ x\in\Omega,\\
&\phi_i(x)=0,\ \ x\in \partial\Omega,
\end{align*}
where the eigenfunctions are normalised by requiring $\|\phi_i\|=1$. The fractional power operator $\mathcal{L}^\gamma$ for any $\gamma\geq0$ and the fractional Sobolev spaces can be defined by the theory of sectorial operators \cite{A-2003, T-1984}
\begin{align*}
&\mathcal{L}^\gamma v:=\sum\limits_{i=1}^\infty\xi_i^\gamma(v,\phi_i)\phi_i,\ \ v=(v,\phi_i)\phi_i,\\
&\check{H}^\gamma(\Omega)=\{v\in L_2(\Omega): |v|^2_{\check{H}^\gamma}:=(\mathcal{L}^\gamma v,v)=\sum\limits_{i=1}^\infty\xi_i^\gamma(v,\phi_i)^2<\infty\},
\end{align*}
with the norm being defined by $\|v\|_{\check{H}^\gamma}=\big(\|v\|^2+ |v |^2_{\check{H}^\gamma}\big)^{1/2}$. Moreover, $\check{H}^\gamma(\Omega)$ is a subspace of the fractional Sobolev space $H^\gamma(\Omega)$ characterized by \cite{A-2003, T-1984}
\begin{align*}
\check{H}^\gamma(\Omega)=\{v\in H^\gamma(\Omega): \mathcal{L}^s v(x)=0, x\in\partial\Omega, s<\gamma/2\},
\end{align*}
and the seminorms $|v|_{\check{H}^\gamma}$ and $|v|_{H^\gamma}$ are equivalent in $\check{H}^\gamma$.

We cite the well-posedness and smoothing properties of problem \eqref{E1}--\eqref{E3}.
\begin{lemma}\label{T-u}\cite{Zheng-2020, Zheng-2019}
If \textbf{Conditions A} and \textbf{B} hold and $\varphi\in \check{H}^{\gamma+2}$, $f\in H^1(0,T;\check{H}^{\gamma})$ for $\gamma>1/2$. Then problem \eqref{E1}--\eqref{E3} has unique solution $u\in C^1\big([0,T];\check{H}^{\gamma}\big)$ and
\begin{align*}
\|u\|_{C^1([0,T];\check{H}^s)}\leq c\big(\|\varphi\|_{\check{H}^{s+2}}+\|f\|_{H^1(0,T;\check{H}^s)}\big),\ \ 0\leq s \leq \gamma.
\end{align*}
\end{lemma}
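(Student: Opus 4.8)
The plan is to reduce the initial-boundary value problem \eqref{E1}--\eqref{E3} to a countable family of scalar variable-order fractional initial value problems by means of the spectral decomposition associated with the Sturm--Liouville operator $\mathcal{L}$. Writing $u(x,t)=\sum_{i=1}^\infty u_i(t)\phi_i(x)$, $f(x,t)=\sum_{i=1}^\infty f_i(t)\phi_i(x)$ and $\varphi(x)=\sum_{i=1}^\infty \varphi_i\phi_i(x)$ with $u_i(t)=(u(\cdot,t),\phi_i)$, each coefficient solves
\begin{align*}
u_i'(t)+\zeta\,\prescript{C}{0}{\mathcal{D}}^{\alpha(t)}_t u_i(t)+\xi_i u_i(t)=f_i(t),\qquad u_i(0)=\varphi_i.
\end{align*}
The two extra spatial orders carried by $\xi_i$ on $\varphi$ in the claimed estimate are precisely the elliptic shift produced by this diagonalization, so the whole argument reduces to controlling $u_i$ and $u_i'$ uniformly in $i$ and then summing against the weights $\xi_i^{s}$ that define $\|\cdot\|_{\check H^s}$.

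The decisive structural observation is that the mobile term $u_i'(t)$ enters with a nonsingular coefficient, whereas the VO Caputo term \eqref{caputo-def} contributes only a weakly singular integral of $u_i'$. Setting $v_i:=u_i'$ and using $u_i(t)=\varphi_i+\int_0^t v_i(\tau)\,\mathrm{d}\tau$, I would recast the scalar problem as the second-kind Volterra integral equation
\begin{align*}
v_i(t)+\frac{\zeta}{\Gamma\big(1-\alpha(t)\big)}\int_0^t\frac{v_i(\tau)}{(t-\tau)^{\alpha(t)}}\,\mathrm{d}\tau=f_i(t)-\xi_i\Big(\varphi_i+\int_0^t v_i(\tau)\,\mathrm{d}\tau\Big).
\end{align*}
Since $\alpha(t)\le\alpha^*<1$ the kernel is integrable, so this is a genuine second-kind equation, and existence and uniqueness of a continuous $v_i$ follow from the standard weakly singular Volterra theory, whence $u_i\in C^1[0,T]$. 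Here \textbf{Condition A} plays the key role: factoring $(t-\tau)^{-\alpha(t)}=(t-\tau)^{-\alpha(0)}\exp\!\big(-(\alpha(t)-\alpha(0))\ln(t-\tau)\big)$ and invoking the existence of $\lim_{t\to0^+}(\alpha(t)-\alpha(0))\ln t$ keeps the kernel amplitude bounded as $t\to0^+$, which is exactly what prevents an initial layer in $v_i$ and secures the $C^1$-in-time regularity rather than a merely singular solution.

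With solvability in hand, I would derive a priori bounds by an energy argument, multiplying the scalar equation by $u_i$ and exploiting the nonnegativity of $\int_0^t u_i\,\prescript{C}{0}{\mathcal{D}}^{\alpha(t)}_t u_i\,\mathrm{d}t$ and of $\xi_i\int_0^t u_i^2$ together with $\zeta>0$; a generalized (fractional) Gronwall inequality then yields a bound on $\sup_t|u_i|$ in terms of $|\varphi_i|$ and $\|f_i\|$ whose constant is independent of $\xi_i$. The Volterra equation for $v_i$, combined with $f\in H^1(0,T;\check H^\gamma)$ to supply $f_i'$, upgrades this to a bound at the $C^1$ level. Finally I would reassemble the modes: squaring the pointwise-in-$i$ estimates, multiplying by $\xi_i^{s}$, summing over $i$, and using $\varphi\in\check H^{s+2}$ (the extra factor $\xi_i^2$ from the diffusion term) together with the data norm of $f$ gives the stated estimate for every $0\le s\le\gamma$; the restriction $\gamma>1/2$ guarantees convergence of the series and the embeddings needed to pass from the coefficientwise bounds to the $\check H^s$ norm.

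The hard part will be the uniform-in-$i$ estimate for the Volterra equation when $\xi_i$ is large, where the weakly singular VO term and the stiff diffusion term compete: the generalized Gronwall argument must be arranged so that the fractional kernel does not spoil the $\xi_i$-independence of the constants, and the near-$t=0$ analysis under \textbf{Condition A} must be carried out with enough care that continuity of $u_i'$ is retained across all modes simultaneously, so that the reassembled sum indeed lands in $C^1\big([0,T];\check H^\gamma\big)$.
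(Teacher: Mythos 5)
You should know at the outset that the paper contains no proof of this lemma: it is stated as a citation, quoted from \cite{Zheng-2020, Zheng-2019}, so the only meaningful comparison is with the arguments in those references. Measured against them, your architecture is essentially the right reconstruction: eigenfunction expansion in the Sturm--Liouville basis of $\mathcal{L}$, reduction to the scalar variable-order problems $u_i'+\zeta\,\prescript{C}{0}{\mathcal{D}}^{\alpha(t)}_t u_i+\xi_i u_i=f_i$, $u_i(0)=\varphi_i$, passage to a second-kind Volterra equation, uniform-in-$i$ bounds, and reassembly against the weights $\xi_i^{s}$, with the two extra orders on $\varphi$ accounting for the factor $\xi_i\varphi_i$. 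Your reading of \textbf{Condition A} --- factoring $(t-\tau)^{-\alpha(t)}=(t-\tau)^{-\alpha(0)}e^{-(\alpha(t)-\alpha(0))\ln(t-\tau)}$ and using the existence of $\lim_{t\to 0^+}(\alpha(t)-\alpha(0))\ln t$ to control the kernel near $t=0$ --- is also exactly the role that hypothesis plays in the cited analysis.

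There are, however, two concrete gaps in the middle of your argument. First, your a priori bound rests on the nonnegativity of $\int_0^t u_i\,\prescript{C}{0}{\mathcal{D}}^{\alpha(t)}_t u_i\,\mathrm{d}t$. For a genuinely time-dependent order this is not an available tool: the kernel $(t-\tau)^{-\alpha(t)}/\Gamma\big(1-\alpha(t)\big)$ is no longer a convolution kernel, the complete-monotonicity/positive-definiteness arguments (Alikhanov-type) that give this inequality for constant $\alpha$ do not transfer, and neither $\alpha\in C[0,T]$ nor \textbf{Condition A} implies it; you cannot simply discard the Caputo term by sign. Second, your fallback --- ``standard weakly singular Volterra theory'' for the equation in $v_i=u_i'$ --- does not by itself deliver constants independent of $\xi_i$: the kernel of that equation contains the term $\xi_i\int_0^t v_i(\tau)\,\mathrm{d}\tau$, so a naive Gronwall argument produces factors on the order of $e^{c\,\xi_i T}$, which destroys the summation over modes. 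The standard repair, and in substance what the cited proofs do, is to solve the stiff linear part exactly by variation of constants, writing $u_i(t)=e^{-\xi_i t}\varphi_i+\int_0^t e^{-\xi_i(t-s)}\big(f_i(s)-\zeta\,\prescript{C}{0}{\mathcal{D}}^{\alpha(s)}_s u_i(s)\big)\,\mathrm{d}s$ and exploiting $\int_0^t e^{-\xi_i(t-s)}\,\mathrm{d}s\le\min\{t,\xi_i^{-1}\}$ so that only the weakly singular part enters the Gronwall step, then differentiating this representation (this is where $f\in H^1(0,T;\check H^{\gamma})$ and $\xi_i\varphi_i$, i.e.\ $\varphi\in\check H^{s+2}$, are consumed) to reach the $C^1$ bound. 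You yourself flag the $\xi_i$-uniformity as ``the hard part,'' but flagging it is not resolving it: as written, the two quantitative lemmas on which the whole estimate hinges are missing.
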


\begin{lemma}\cite{Zheng-2020, Zheng-2019}
If \textbf{Conditions A} and \textbf{B} hold, $\alpha(t) \in C^1[0,T]$ and $\varphi\in \check{H}^{s+4}$, $f\in H^1(0,T;\check{H}^{s+2})\bigcap H^2(0,T;\check{H}^s)$ for $s\geq0$.
If $\alpha(0)>0$, then $u\in C^2\big([0,T];\check{H}^{s}\big)$ and for $0<\theta \ll 1$,
\begin{align*}
\|u\|_{C^2([\theta,T];\check{H}^s)}\leq c\theta^{-\alpha(0)}\big(\|\varphi\|_{\check{H}^{s+4}}+\|f\|_{H^1(0,T;\check{H}^{s+2})}+\|f\|_{H^2(0,T;\check{H}^s)}\big);
\end{align*}
if $\alpha(0)=0$, then $u\in C^2\big([0,T];\check{H}^{s}\big)$ and
\begin{align*}
\|u\|_{C^2([0,T];\check{H}^s)}\leq c\big(\|\varphi\|_{\check{H}^{s+4}}+\|f\|_{H^1(0,T;\check{H}^{s+2})}+\|f\|_{H^2(0,T;\check{H}^s)}\big).
\end{align*}
\end{lemma}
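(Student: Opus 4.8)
The plan is to pass from the PDE to a countable family of scalar fractional ODEs by spectral decomposition, to derive sharp pointwise-in-$t$ bounds on the second derivative of each temporal mode, and then to reassemble the modes with the correct spectral weights. First I would write $u(x,t)=\sum_{i\geq 1}u_i(t)\phi_i(x)$ with $u_i(t)=(u(\cdot,t),\phi_i)$; testing \eqref{E1} against $\phi_i$ and using $\mathcal{L}\phi_i=\xi_i\phi_i$ together with the homogeneous boundary condition \eqref{E3} reduces the problem to
\begin{align*}
u_i'(t)+\zeta\,\prescript{C}{0}{\mathcal{D}}^{\alpha(t)}_t u_i(t)+\xi_i u_i(t)=f_i(t),\qquad u_i(0)=\varphi_i,
\end{align*}
where $f_i(t)=(f(\cdot,t),\phi_i)$ and $\varphi_i=(\varphi,\phi_i)$. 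Since the top-order term is the integer derivative $u_i'$, the Caputo term acts as a lower-order perturbation, and the entire difficulty is confined to the initial layer it creates near $t=0$.

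Next I would bootstrap regularity on each mode. Lemma \ref{T-u} already gives $u\in C^1([0,T];\check{H}^\gamma)$, so $u_i\in C^1[0,T]$ with $|u_i(t)|$ and $|u_i'(t)|$ bounded in terms of $|\varphi_i|$, $\xi_i|\varphi_i|$ and $\sup_t|f_i|$; this is the base case. To climb to $C^2$ I would differentiate the mode equation in $t$, which is where $\alpha\in C^1[0,T]$ is indispensable, as it permits differentiation of both $1/\Gamma(1-\alpha(t))$ and the kernel exponent. After the substitution $s=t-\tau$, differentiating the Caputo integral in \eqref{caputo-def} produces
\begin{align*}
\frac{\mathrm{d}}{\mathrm{d}t}\int_0^t\frac{u_i'(\tau)}{(t-\tau)^{\alpha(t)}}\,\mathrm{d}\tau=\frac{u_i'(0)}{t^{\alpha(t)}}+\int_0^t\frac{u_i''(\tau)}{(t-\tau)^{\alpha(t)}}\,\mathrm{d}\tau-\alpha'(t)\int_0^t\frac{\ln(t-\tau)}{(t-\tau)^{\alpha(t)}}u_i'(\tau)\,\mathrm{d}\tau,
\end{align*}
so $u_i''$ obeys a weakly singular Volterra equation of the second kind whose forcing carries the explicit boundary term $u_i'(0)\,t^{-\alpha(t)}$. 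That boundary term is the analytic origin of the initial layer.

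The crux, and what I expect to be the main obstacle, is to convert $t^{-\alpha(t)}$ into the stated rate $t^{-\alpha(0)}$ uniformly in $t$ and uniformly across the modes. Here \textbf{Condition A} is exactly what is needed: writing $t^{-\alpha(t)}=t^{-\alpha(0)}\exp\!\big(-(\alpha(t)-\alpha(0))\ln t\big)$, the hypothesis that $\lim_{t\to 0^+}(\alpha(t)-\alpha(0))\ln t$ exists keeps the exponential factor bounded on $(0,\theta]$, whence $t^{-\alpha(t)}\leq c\,t^{-\alpha(0)}$. If $\alpha(0)>0$, a Gronwall inequality adapted to weakly singular kernels then yields $|u_i''(t)|\leq c\,t^{-\alpha(0)}\big(\xi_i|\varphi_i|+\ldots\big)$, and $\sup_{t\in[\theta,T]}t^{-\alpha(0)}=\theta^{-\alpha(0)}$ produces the prefactor in the first estimate. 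If $\alpha(0)=0$, the kernel exponent degenerates to $1$, the boundary term $u_i'(0)\,t^{-\alpha(t)}$ is merely bounded, and the Volterra forcing is continuous, so $u_i''$ is bounded on all of $[0,T]$ with no dependence on $\theta$---precisely the integer-order regularity of the second estimate.

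Finally I would reassemble the modes. Multiplying the mode bounds by $\xi_i^s$ and summing, the contributions $\xi_i u_i'$ and $\xi_i^2 u_i$ that appear after differentiating the equation are exactly what demand the two additional orders of regularity: $\varphi\in\check{H}^{s+4}$ ensures $\sum_i\xi_i^{s+4}\varphi_i^2<\infty$, while $f\in H^1(0,T;\check{H}^{s+2})\cap H^2(0,T;\check{H}^s)$ controls the corresponding $f$-terms, so the weighted series converge and yield $\check{H}^s$-bounds on $\|u''(\cdot,t)\|$. Taking the supremum over $t\in[\theta,T]$ (respectively $t\in[0,T]$) and combining with the $C^1$ bound from Lemma \ref{T-u} gives the two displayed inequalities. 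The delicate bookkeeping is to keep every constant independent of $i$ so that the summation survives; this, together with the uniform domination of $t^{-\alpha(t)}$ by $t^{-\alpha(0)}$ from \textbf{Condition A}, is where the real work lies.
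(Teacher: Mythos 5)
The paper does not actually prove this lemma---it is imported verbatim, with citation, from \cite{Zheng-2020, Zheng-2019}, so there is no in-paper proof to compare against; judged against the argument in those cited references, your outline follows essentially the same route. That route is exactly yours: eigenfunction expansion reducing \eqref{E1}--\eqref{E3} to the per-mode equations $u_i'+\zeta\,\prescript{C}{0}{\mathcal{D}}^{\alpha(t)}_t u_i+\xi_i u_i=f_i$ (with the integer derivative as the leading term, so the Caputo term is a lower-order perturbation), differentiation in $t$ to a weakly singular Volterra equation for $u_i''$ whose forcing carries the layer term $u_i'(0)\,t^{-\alpha(t)}$, the \textbf{Condition A} domination $t^{-\alpha(t)}\le c\,t^{-\alpha(0)}$ (bounded when $\alpha(0)=0$), a singular-kernel Gronwall estimate uniform in $i$, and reassembly with the weights $\xi_i^s$---which is precisely why the hypotheses demand $\varphi\in\check{H}^{s+4}$ and $f\in H^1(0,T;\check{H}^{s+2})\bigcap H^2(0,T;\check{H}^s)$.
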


To be specific, in addition to the assumption that the data in \eqref{E1}--\eqref{E3} are sufficiently smooth, the condition $\alpha(0)=0$ admits a temporal $C^2$ solution with respect to the spatial norm $\|\cdot\|_{\check{H}^s(\Omega)}$ for large $s$ defined by the eigenpairs of the diffusion operator in \eqref{E1}--\eqref{E3}. Otherwise, if $\alpha(0)>0$, then $u_{tt}\in C(0,T]$ satisfies the pointwise-in-time estimate $\|u_{tt}\|_{\check{H}^s(\Omega)}\leq ct^{-\alpha(0)}$, which indicates the singularity as that of the solutions to the CO tFDEs.

\subsection{Basic approximations to VO Caputo fractional derivative}
In this section, we first recall $L1$ formula and  $F$-$L1$ formula for the VO Caputo fractional derivative (\ref{caputo}) with $t\in[0,T]$. For a positive integer $n$, let $\Delta t=T/n$ be the time step and we further define $t_k=k\Delta t$ for $k=0,1,\ldots,n$. At each time level $t_k$, we have
\begin{align}\label{caputo}
\prescript{C}{0}{\mathcal{D}}^{\alpha(t_k)}_tu(x,t_k)&=\frac 1 {\Gamma\big(1-\alpha(t_k)\big)}\int_{0}^{t_{k}}\frac{u'(x,\tau)}{(t_k-\tau)^{\alpha(t_k)}}\mathrm{d}\tau.
\end{align}
For convenience, denote $\alpha_k=\alpha(t_k)$. To discretize the VO Caputo fractional derivative, denote the linear interpolation for $u(x,\tau)$ over the interval $[t_{k-1},t_k]$ with $1\leq k\leq n$ by
\begin{align*}
L_{1,k} (\tau)=\frac{t_k-\tau}{\Delta t}u(x,t_{k-1})+\frac{\tau-t_{k-1}}{\Delta t}u(x,t_k),\ \ \tau\in[t_{k-1},t_k],
\end{align*}
giving a first-order approximation to $u'(x,\tau)$ on $[t_{k-1},t_k]$ by
\begin{align*}
L_{1,k}' (\tau)=\frac{u(x,t_k)-u(x,t_{k-1})}{\Delta t}.
\end{align*}
Then, the piecewise approximating function is defined by
\begin{align*}
L_1 (\tau)=\left\{L_{1,k}(\tau)|\tau\in[t_{k-1},t_k], k=1,2,\ldots,n\right\}.
\end{align*}
Thus,  $L1$  formula to (\ref{caputo}) is obtained as \cite{Zheng-2020}
\begin{align}
\prescript{}{0}{\mathcal{D}}^{\alpha_k}_tu(x,t_k)=&\frac 1 {\Gamma(1-\alpha_k)}\int_{0}^{t_k}\frac{L_1'(\tau)}{(t_k-\tau)^{\alpha_k}}\mathrm{d}\tau\nonumber\\
=&\frac {\Delta t^{-\alpha_k}}{\Gamma(2-\alpha_k)}
  \bigg(a_0^{(k)}u(x,t_k)-\sum\limits_{l=1}^{k-1}\Big(a_{k-l-1}^{(k)}-a_{k-l}^{(k)}\Big)u(x,t_l)-a_{k-1}^{(k)}u(x,t_0)\bigg),\label{L1}
\end{align}
where $a_l^{(k)}=(l+1)^{1-\alpha_k}-l^{1-\alpha_k}$. The truncation error of $L1$ formula is estimated by the following lemma.
\begin{lemma}\label{truncation-L1}(see \cite{Zheng-2020})
Suppose {\bf Conditions A} and {\bf B} hold, $\alpha(t) \in C^1[0,T]$ and $\varphi\in \check{H}^4$, $f\in H^1(0,T;\check{H}^2)\bigcap H^2(0,T;L_2)$. Let the VO Caputo fractional derivative at $t_k$ be as (\ref{caputo}) and $L1$ formula be defined by (\ref{L1}).
If $\alpha(0)>0$,
\begin{align*}
\Big\|\prescript{C}{0}{\mathcal{D}}^{\alpha_k}_tu(x,t_k)-\prescript{}{0}{\mathcal{D}}^{\alpha_k}_tu(x,t_k)\Big\|\leq ck^{-\alpha^*}\Delta t^{1-\alpha^*};
\end{align*}
if $\alpha(0)=0$,
\begin{align*}
\Big\|\prescript{C}{0}{\mathcal{D}}^{\alpha(t)}_tu(x,t)-\prescript{}{0}{\mathcal{D}}^{\alpha(t)}_tu(x,t)\Big\|_{\widehat{L}_\infty(0,T;L_2)}
:=\max_{1\leq k\leq n}\Big\|\prescript{C}{0}{\mathcal{D}}^{\alpha_k}_tu(x,t_k)-\prescript{}{0}{\mathcal{D}}^{\alpha_k}_tu(x,t_k)\Big\|\leq &c\Delta t.
\end{align*}
\end{lemma}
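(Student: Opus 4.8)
The plan is to represent the truncation error as the singular kernel of \eqref{caputo} acting on the interpolation error of $u'$. Since $L_1'(\tau)=L_{1,l}'(\tau)$ on each $[t_{l-1},t_l]$, subtracting \eqref{L1} from \eqref{caputo} gives
\begin{align*}
\prescript{C}{0}{\mathcal{D}}^{\alpha_k}_tu(x,t_k)-\prescript{}{0}{\mathcal{D}}^{\alpha_k}_tu(x,t_k)
=\frac{1}{\Gamma(1-\alpha_k)}\sum_{l=1}^{k}\int_{t_{l-1}}^{t_l}\frac{u'(x,\tau)-L_{1,l}'(\tau)}{(t_k-\tau)^{\alpha_k}}\,\mathrm{d}\tau.
\end{align*}
The crucial structural fact is that $L_{1,l}'$ equals the average slope of $u$ over $[t_{l-1},t_l]$, so that $u'-L_{1,l}'$ has zero mean there. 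Setting $G_l(\tau):=\int_{t_{l-1}}^{\tau}\big(u'(x,s)-L_{1,l}'(s)\big)\,\mathrm{d}s=u(x,\tau)-L_{1,l}(\tau)$, we have $G_l(t_{l-1})=G_l(t_l)=0$, and integration by parts annihilates the boundary terms:
\begin{align*}
\int_{t_{l-1}}^{t_l}\frac{u'(x,\tau)-L_{1,l}'(\tau)}{(t_k-\tau)^{\alpha_k}}\,\mathrm{d}\tau
=-\alpha_k\int_{t_{l-1}}^{t_l}\frac{G_l(\tau)}{(t_k-\tau)^{1+\alpha_k}}\,\mathrm{d}\tau.
\end{align*}
Thus everything reduces to controlling the plain interpolation error $G_l(\tau)=u(x,\tau)-L_{1,l}(\tau)$ of $u$ itself against the more singular kernel $(t_k-\tau)^{-1-\alpha_k}$, for which the smoothing estimates recalled above are exactly tailored.

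Next I would estimate the subinterval contributions in three regimes. On the interior intervals $2\le l\le k-1$, where $u$ is smooth in time, the standard linear-interpolation bound $\|G_l(\tau)\|\le c\,\Delta t^2\sup_{[t_{l-1},t_l]}\|u_{tt}(\cdot,s)\|$ combined with the pointwise-in-time estimate $\|u_{tt}(\cdot,s)\|\le c\,s^{-\alpha(0)}$ (case $\alpha(0)>0$) bounds each term by $c\,\alpha_k\,\Delta t^2 t_{l-1}^{-\alpha(0)}\int_{t_{l-1}}^{t_l}(t_k-\tau)^{-1-\alpha_k}\,\mathrm{d}\tau$, where the kernel is harmless because $t_k-\tau\ge(k-l)\Delta t$. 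On the last interval $l=k$ the kernel $(t_k-\tau)^{-1-\alpha_k}$ is no longer integrable at $\tau=t_k$, so I would instead exploit the sharper factorized bound $\|G_k(\tau)\|\le c\,(\tau-t_{k-1})(t_k-\tau)\sup_{[t_{k-1},t_k]}\|u_{tt}\|$, which cancels one power of $(t_k-\tau)$ and restores integrability, yielding a contribution of order $c\,t_{k-1}^{-\alpha(0)}\Delta t^{2-\alpha_k}$. On the first interval $l=1$, where $u_{tt}$ may blow up, I would not integrate by parts but estimate directly: by Lemma \ref{T-u} the solution is $C^1$ in time, so $\|u'(x,\tau)-L_{1,1}'(\tau)\|\le c$, whence this term is at most $c\int_0^{t_1}(t_k-\tau)^{-\alpha_k}\,\mathrm{d}\tau\le c\,k^{-\alpha^*}\Delta t^{1-\alpha^*}$, which turns out to be the dominant contribution.

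Finally I would sum the three regimes. Using $t_l=l\Delta t$, the interior sum is controlled by $c\,\Delta t\,t_k^{-\alpha(0)}=c\,k^{-\alpha(0)}\Delta t^{1-\alpha(0)}$, the last interval by $c\,t_{k-1}^{-\alpha(0)}\Delta t^{2-\alpha_k}$, and the first interval by the stated $c\,k^{-\alpha^*}\Delta t^{1-\alpha^*}$; since $\alpha(0)\le\alpha^*$, $\alpha_k\le\alpha^*$ and $t_k\le T$, the elementary comparisons $t_k^{\alpha^*-\alpha(0)}\le c$ and $\Delta t^{1-\alpha_k}\le c$ absorb all three into $c\,k^{-\alpha^*}\Delta t^{1-\alpha^*}$, which proves the first assertion. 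For $\alpha(0)=0$ the smoothing estimate improves to the uniform bound $\|u_{tt}(\cdot,s)\|\le c$ on all of $[0,T]$, so the crude first-interval estimate can be replaced by the genuine $C^2$ bound $\|u'-L_{1,1}'\|\le c\,\Delta t$, and then every regime contributes $O(\Delta t)$ uniformly in $k$, which gives the $\widehat{L}_\infty(0,T;L_2)$ estimate after taking the maximum. The main obstacle is the interplay between the two singularities — the temporal singularity of $u_{tt}$ at $t=0$ and the kernel singularity at $\tau=t_k$ — which forces the separate, sharper treatment of the first and last intervals and the careful bookkeeping of exponents that keeps the first-interval term, rather than the accumulation of the interior errors, in control of the final rate.
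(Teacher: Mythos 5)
This lemma is not proved in the paper at all: it is imported verbatim from the cited reference \cite{Zheng-2020}, so there is no in-paper proof to compare against; your argument is, in substance, the standard one and essentially reconstructs the cited proof. The skeleton is sound: writing the error as the kernel acting on $u'-L_1'$, integrating by parts subinterval-by-subinterval (the boundary terms vanish because $u-L_{1,l}$ is zero at the nodes, and at $\tau=t_k$ because $G_k(\tau)=\mathcal{O}(t_k-\tau)$), treating the first subinterval with only the $C^1$ bound of Lemma \ref{T-u}, the last with the factorized bound $\|G_k(\tau)\|\le c(\tau-t_{k-1})(t_k-\tau)\sup\|u_{tt}\|$, and the interior ones with the weighted estimate $\|u_{tt}(\cdot,s)\|\le cs^{-\alpha(0)}$; your exponent bookkeeping ($t_k^{\alpha^*-\alpha(0)}\le c$, $\Delta t^{1-\alpha_k}\le c$) is correct and does absorb everything into $ck^{-\alpha^*}\Delta t^{1-\alpha^*}$, and the $\alpha(0)=0$ case follows as you say from the uniform $C^2$ bound, since each level then contributes $c\Delta t^{2-\alpha_k}\le c\Delta t$. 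The one place where real work is asserted rather than done is the interior sum: the claim that $\Delta t^{2}\sum_{l=2}^{k-1}t_{l-1}^{-\alpha(0)}\,\alpha_k\int_{t_{l-1}}^{t_l}(t_k-\tau)^{-1-\alpha_k}\,\mathrm{d}\tau\le c\,\Delta t\,t_k^{-\alpha(0)}$ does not follow from the crude replacement $t_{l-1}^{-\alpha(0)}\le t_1^{-\alpha(0)}$, which yields only $c\Delta t^{\,2-\alpha(0)-\alpha_k}$ and is \emph{worse} than the target whenever $\alpha(0)+\alpha_k>1$. You need the standard split at $l\approx k/2$: on the half near $t_k$ use $t_{l-1}^{-\alpha(0)}\le ct_k^{-\alpha(0)}$ and telescope the kernel increments to get $ct_k^{-\alpha(0)}\Delta t^{2-\alpha_k}$, while on the half near the origin use $(t_k-\tau)^{-1-\alpha_k}\le ct_k^{-1-\alpha_k}$ and integrate $s^{-\alpha(0)}$ to get $c\Delta t^{2}t_k^{-\alpha_k-\alpha(0)}$, both of which are $\le c\Delta t\,t_k^{-\alpha(0)}$. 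With that detail supplied, your proof is complete and coincides with the approach of the cited source.
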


Due to the nonlocality of the VO fractional derivative, using $L1$ formula to calculate the value at the current time level, it needs to compute the sum of a series including the values of all previous time levels. Therefore, $L1$ formula requires large memory and computational cost. A fast algorithm (denoted by $F$-$L1$ formula) is developed in \cite{Zhang-2021} to discretize the VO Caputo fractional derivative \eqref{caputo-def}. For the expected accuracy $\epsilon$, $F$-$L1$ formula for the VO Caputo fractional derivative is given by
\begin{align*}
\prescript{\mathcal{F}}{0}{\mathcal{D}}^{\alpha_k}_tu(x,t_k)
=&\frac {T^{-\alpha_k}}{\Gamma(1-\alpha_k)}
  \sum_{i=\underline{N}+1}^{\overline{N}}\widetilde{\theta}_i^{(k)}\widetilde{F}_{k,i}+\frac{u(x,t_{k})-u(x,t_{k-1})}{\Delta t^{\alpha_k}\Gamma(2-\alpha_k)},
\end{align*}
where $\widetilde{F}_{1,i}=0$ and
\begin{align*}
\widetilde{F}_{k,i}=e^{-\widetilde{\lambda}_i\Delta t/T} \widetilde{F}_{k-1,i}
                    +T\frac{e^{-\widetilde{\lambda}_i\Delta t /T}-e^{-2\widetilde{\lambda}_i\Delta t/T}}{\widetilde{\lambda}_i\Delta t}\Big(u(x,t_{k-1})-u(x,t_{k-2})\Big),\ \ k=2,3,\ldots,n,
\end{align*}
in which the quadrature exponents and weights are given by
\begin{align*}
\widetilde{\lambda}_i=e^{i\widetilde{h}},\ \ \widetilde{\theta}_i^{(k)}=\frac{\widetilde{h}e^{\alpha_ki\widetilde{h}}}{\Gamma(\alpha_k)}
\end{align*}
with
\begin{align}\label{parameter-F}
\widetilde{h}&=\frac {2\pi}{\log3+\alpha^*\log(\cos 1)^{-1}+\log\epsilon^{-1}},\nonumber\\
\underline{N}&=\left\lceil\frac 1{\widetilde{h}}\frac 1{\alpha_*}\big(\log\epsilon+\log\Gamma(1+\alpha^*)\big)\right\rceil,\\
\overline{N}&=\left\lfloor\frac 1{\widetilde{h}}\left(\log \frac T{\Delta t}+\log\log\epsilon^{-1}+\log\alpha_*+2^{-1}\right)\right\rfloor.\nonumber
\end{align}
In addition,
\begin{align*}
\prescript{\mathcal{F}}{0}{\mathcal{D}}^{\alpha_1}_tu(x,t_1)=\frac{u(x,t_1)-u(x,t_0)}{\Delta t^{\alpha_1}\Gamma(2-\alpha_1)}.
\end{align*}

Compared with $L1$ formula, $F$-$L1$ formula reduces the storage requirement from $\mO(n)$ to $\mO(\log^2 n)$ and the computational cost from $\mO(n^2)$ to $\mathcal{O}(n\log^2 n)$. It provides an efficient tool to approximate the VO Caputo fractional derivative. However, the fast method proposed in \cite{Zhang-2021} cannot deal with the problem with a small $\alpha_*$. In fact, a small value of $\alpha_*$ requires a large $\underline{N}$ and even $\underline{N} \rightarrow \infty$ as $\alpha_* \rightarrow 0$, which affects the accuracy of the approximation.
In order to overcome the shortage, we develop a robust fast approximation, which is called $RF$-$L1$ formula for the VO Caputo fractional derivative.

\section{$RF$-$L1$ formula to VO Caputo fractional derivative}\label{fast-formula}
We first split the integral in \eqref{L1} into two parts. Then that can be decomposed as
\begin{align}\label{HL}
\prescript{C}{0}{\mathcal{D}}^{\alpha_k}_tu(x,t_k)
&\approx \frac 1 {\Gamma(1-\alpha_k)}\int_{0}^{t_{k-1}}\frac{L_{1}'(\tau)}{(t_k-\tau)^{\alpha_k}}\mathrm{d}\tau
         +\frac 1 {\Gamma(1-\alpha_k)}\int_{t_{k-1}}^{t_k}\frac{L_{1,k}'(\tau)}{(t_k-\tau)^{\alpha_k}}\mathrm{d}\tau\nonumber\\
&:= I_{his}(t_k)+I_{loc}(t_k),
\end{align}
where we call $I_{his}(t_k)$ and $I_{loc}(t_k)$ the history and local parts, respectively. Since the local part contributes few memory and computational cost compared with the history part, we keep the local part be as in \eqref{HL}. For the history part, noting that $I_{his}(t_1)=0$, we have
\begin{align}\label{RF0}
\prescript{\mathcal{RF}}{0}{\mathcal{D}}^{\alpha_1}_tu(x,t_1)
:=I_{loc}(t_1)=\prescript{ }{0}{\mathcal{D}}{}^{\alpha_1}_tu(x,t_1).
\end{align}
Using the integration by parts for $k=2,3,\ldots,n$, we have
\begin{align}\label{his}
I_{his}(t_k)
=&\frac{1}{\Gamma(1-\alpha_k)}\bigg( \frac{u(x,t_{k-1})}{\Delta t^{\alpha_k}}
  -\frac{u(x,t_0)}{t_k^{\alpha_k}}-\alpha_k\int_0^{t_{k-1}}\frac{L_{1}(\tau)}{(t_k-\tau)^{1+\alpha_k}}\mathrm{d}\tau\bigg)\nonumber\\
=&\frac{1}{\Gamma(1-\alpha_k)}\bigg( \frac{u(x,t_{k-1})}{\Delta t^{\alpha_k}}-\frac{u(x,t_0)}{t_k^{\alpha_k}}
         -\frac{\alpha_k}{T^{1+\alpha_k}}\int_0^{t_{k-1}} L_1(\tau) \Big(\frac{t_k-\tau} T\Big)^{-1-\alpha_k}\mathrm{d}\tau\bigg).
\end{align}
Noting that $1+\alpha_k>0$ and $0<\Delta t/T\leq (t_k-\tau)/T$ for $\tau\in[0,t_{k-1}]$, with the help of ESA technique \cite{Beylkin-2017, Zhang-2021}, the kernel $\big((t_k-\tau)/ T\big)^{-(1+\alpha_k)}$ in (\ref{his}) can be approximated using a linear combination of exponentials. We have the following lemma to structure the robust fast formula.
\begin{lemma}\cite{Beylkin-2017, Zhang-2021}\label{L-soe}
For any constant $\beta_k\in[\beta_*,\beta^*]\subset[1,2)$, $0< \Delta t/T\leq (t_k-\tau)/T\leq1$ for $\tau\in[0,t_{k-1}]$, $1\leq k \leq n-1$ and the expected accuracy $0<\epsilon\leq\ 1/e$, there exist a constant $h$, integers $N^*$ and $N_*$, which satisfy
\begin{align}\label{parameter-RF}
&h=\frac {2\pi}{\log3+\beta^*\log(\cos 1)^{-1}+\log\epsilon^{-1}},\nonumber\\
&N^*=\left\lceil\frac 1{h}\frac 1{\beta_*}\big(\log\epsilon+\log\Gamma(1+\beta^*)\big)\right\rceil,\\
&N_*=\left\lfloor\frac 1{h}\Big(\log \frac T{\Delta t}+\log\log\epsilon^{-1}+\log\beta_*+2^{-1}\Big)\right\rfloor,\nonumber
\end{align}
such that
\begin{align*}
\Bigg|\Big(\frac {t_k-\tau}{T}\Big)^{-\beta_k}-\sum_{i=N_*+1}^{N^*}\theta_i^{(k)} e^{-\lambda_i(t_k-\tau)/T}\Bigg|
\leq \epsilon \Big(\frac {t_k-\tau}{T}\Big)^{-\beta_k},
\end{align*}
where the quadrature exponents and weights are given by
\begin{align*}
\lambda_i=e^{ih},\ \ \theta_i^{(k)}=\frac{he^{\beta_kih}}{\Gamma(\beta_k)}. 
\end{align*}
Furthermore, the total number of terms in the summation can be estimated as
\begin{align*}
N_\epsilon=N^*-N_*
          \leq \frac 1 {10}\Big(2\log\frac 1 \epsilon+\log\beta^*+2\Big)
               \Big(\log \frac T {\Delta t}+\frac 1 \beta_*\log\frac 1 \epsilon+\log\log\frac 1 \epsilon+\frac 3 2\Big).
\end{align*}
\end{lemma}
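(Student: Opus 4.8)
The plan is to represent the power kernel $r^{-\beta_k}$, with $r:=(t_k-\tau)/T\in[\Delta t/T,1]$, as an infinite sum of decaying exponentials via the Gamma-function integral and a trapezoidal quadrature, and then to truncate that sum to the finite range $N_*+1\le i\le N^*$ while tracking the relative error. I would start from the representation
\[
r^{-\beta_k}=\frac{1}{\Gamma(\beta_k)}\int_0^\infty s^{\beta_k-1}e^{-rs}\,ds,\qquad \beta_k>0,\ r>0,
\]
and apply the substitution $s=e^{u}$ to turn it into an integral over all of $\IR$,
\[
r^{-\beta_k}=\frac{1}{\Gamma(\beta_k)}\int_{-\infty}^{\infty} e^{\beta_k u}\,e^{-r e^{u}}\,du.
\]
Discretizing by the trapezoidal rule with step $h$ and nodes $u_i=ih$ yields $\tfrac{h}{\Gamma(\beta_k)}\sum_{i=-\infty}^{\infty}e^{\beta_k i h}e^{-re^{ih}}$, whose general term is exactly $\theta_i^{(k)}e^{-\lambda_i r}$ with $\lambda_i=e^{ih}$ and $\theta_i^{(k)}=he^{\beta_k i h}/\Gamma(\beta_k)$; this already identifies the claimed exponents and weights.

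Next I would decompose the total error into a discretization error (infinite trapezoidal sum minus the exact integral) and two truncation errors from discarding the tails $i\le N_*$ and $i\ge N^*+1$. For the small-$s$ tail, $e^{-re^{u}}\le1$ bounds the integrand by $e^{\beta_k u}$, so the discarded mass is at most $e^{\beta_k N_* h}/\Gamma(1+\beta_k)$; since $r\le1$ forces $r^{-\beta_k}\ge1$, demanding this be $\le\epsilon$ fixes the corresponding endpoint in terms of $\log\epsilon$ and $\log\Gamma(1+\beta^*)$. For the large-$s$ tail, the change of variables $w=re^{u}$ turns the remainder into an upper incomplete Gamma integral $\Gamma(\beta_k,re^{N^*h})/\Gamma(\beta_k)$, which decays once $re^{N^*h}\gg1$; using the worst case $r\ge\Delta t/T$ fixes the other endpoint in terms of $\log(T/\Delta t)$ and $\log\log\epsilon^{-1}$. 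Taking the worst case over $\beta_k\in[\beta_*,\beta^*]$ produces the stated $N_*,N^*$, and differencing them gives the counting bound on $N_\epsilon$.

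The main obstacle is the discretization error, which must be controlled uniformly in $r$ and $\beta_k$. I would use the complex-analytic bound for the trapezoidal rule: the integrand $g(u)=e^{\beta_k u}e^{-re^{u}}$ extends analytically to the strip $|\operatorname{Im}u|\le1$, and on the boundary $u=v\pm i$ one has $\operatorname{Re}(e^{u})=e^{v}\cos1>0$, so $|g(v\pm i)|=e^{\beta_k v}e^{-re^{v}\cos1}$ remains integrable. The key computation is
\[
\int_{-\infty}^{\infty}|g(v\pm i)|\,dv=\Gamma(\beta_k)\,(\cos1)^{-\beta_k}\,r^{-\beta_k},
\]
so that the Poisson-summation/contour-shift estimate gives a relative discretization error of order $(\cos1)^{-\beta_k}e^{-2\pi/h}$. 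This is exactly where $\cos1$ enters, through $\operatorname{Re}(e^{u})$ on $\operatorname{Im}u=\pm1$. Forcing this quantity below $\epsilon$, and using $\beta_k\le\beta^*$ in the worst case, requires $2\pi/h\ge\log\epsilon^{-1}+\beta^*\log(\cos1)^{-1}+\log3$, which is precisely the stated choice of $h$; the constant $\log3$ absorbs the $1/(1-e^{-2\pi/h})$ prefactor.

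Finally I would combine the three estimates: the discretization error and the two truncation errors are each $\le\epsilon$ relative to $r^{-\beta_k}$ by the choices of $h$, $N_*$, $N^*$, and since $r^{-\beta_k}\ge1$ the absolute bounds convert directly into the relative statement
\[
\Bigl|\,r^{-\beta_k}-\sum_{i=N_*+1}^{N^*}\theta_i^{(k)}e^{-\lambda_i r}\,\Bigr|\le\epsilon\, r^{-\beta_k}.
\]
The delicate part throughout is the bookkeeping of constants in the trapezoidal error on the analyticity strip; once that is in place, the truncation tails and the term count $N_\epsilon$ follow from elementary estimates on $e^{\beta_k u}$ and the incomplete Gamma function.
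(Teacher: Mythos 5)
Your proposal is correct and follows essentially the same route as the source of this lemma: the paper itself gives no proof but cites it from Beylkin--Monzon and Zhang--Fang--Sun, where the argument is exactly your Gamma-integral representation $r^{-\beta_k}=\frac{1}{\Gamma(\beta_k)}\int_{-\infty}^{\infty}e^{\beta_k u}e^{-re^{u}}\,\mathrm{d}u$ discretized by the trapezoidal rule, with the discretization error controlled by analyticity on the strip $|\operatorname{Im}u|\leq 1$ (whence the $(\cos 1)^{-\beta_k}$ factor and the choice of $h$) and the two tails bounded via $e^{-re^{u}}\leq 1$ and the incomplete Gamma function, respectively. The only discrepancy is notational and is inherited from the paper: in \eqref{parameter-RF} the formulas for $N^*$ and $N_*$ are swapped relative to the summation limits $\sum_{i=N_*+1}^{N^*}$ (the $\log\epsilon$-endpoint must be the lower, negative index and the $\log(T/\Delta t)$-endpoint the upper one, as your tail analysis correctly assigns them).
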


So invoking  Lemma \ref{L-soe} with $\beta_k=1+\alpha_k$ in \eqref{his}, we obtain
\begin{align}\label{app-his}
&I_{his}(t_k)\nonumber\\
\approx&\frac 1{\Gamma(1-\alpha_k)}\bigg(\frac{u(x,t_{k-1})}{\Delta t^{\alpha_k}}-\frac{u(x,t_{0})}{t_k^{\alpha_k}}
        -\frac{\alpha_k}{T^{1+\alpha_k}}\sum_{i=N_*+1}^{N^*}\theta_i^{(k)}\int_0^{t_{k-1}}L_1(\tau)e^{-(t_k-\tau)/T\lambda_i}\mathrm{d}\tau\bigg),
\end{align}
where the quadrature weights and points are defined by
\begin{align*}
\lambda_i=e^{ih}, \ \ \theta_i^{(k)}=\frac{he^{(1+\alpha_k) ih}}{\Gamma(1+\alpha_k)},
\end{align*}
in which $h$, $N_*$ and $N^*$ are defined by \eqref{parameter-RF}.
The related discretization formula is obtained (later in Lemma \ref{L-truncation-L1-RF}) as
\begin{align}\label{his-RF}
I_{his}(t_k)=I_{his,\epsilon}(t_k)+\mathcal{O}(\epsilon\Delta t^{-\alpha_k}),
\end{align}
where $I_{his,\epsilon}(t_k)$ is defined by
\begin{align}\label{his-fast}
I_{his,\epsilon}(t_k)
=\frac 1{\Gamma(1-\alpha_k)}\bigg(\frac{u(x,t_{k-1})}{\Delta t^{\alpha_k}}-\frac{u(x,t_0)}{t_k^{\alpha_k}}
 -\frac{\alpha_k}{T^{1+\alpha_k}}\sum_{i=N_*+1}^{N^*}\theta_i^{(k)}F_{k,i}\bigg),
\end{align}
in which $F_{k,i}$ is given by
\begin{align}\label{y}
F_{k,i}=\int_0^{t_{k-1}}L_1(\tau)e^{-(t_k-\tau)\lambda_i/T}\mathrm{d}\tau.
\end{align}
Note that $F_{1,i}=0$ and $F_{k,i}$ can be calculated by the following recursive relation for $k=2,3,\ldots,n,$
\begin{align}\label{y-compute}
F_{k,i}
=&e^{-\Delta t\lambda_i/T} F_{k-1,i}+\int_{t_{k-2}}^{t_{k-1}}L_{1,k-1}(\tau)e^{-(t_k-\tau)\lambda_i/T}\mathrm{d}\tau\nonumber\\
=&e^{-\Delta t\lambda_i/T} F_{k-1,i}\nonumber\\
 &+T\frac{e^{-\Delta t\lambda_i/T}}{\Delta t\lambda_i^2}\Big(-\Delta t\lambda_i e^{-\Delta t\lambda_i/T}+T-Te^{-\Delta t\lambda_i/T}\Big)u(x,t_{k-2})\nonumber\\
 &+T\frac{e^{-\Delta t\lambda_i/T}}{\Delta t\lambda_i^2}\Big(\Delta t\lambda_i-T+Te^{-\Delta t\lambda_i/T}\Big)u(x,t_{k-1}).
\end{align}

According to \eqref{his-RF}, replacing $I_{his}(t_k)$ in (\ref{HL}) by (\ref{his-fast}), we obtain $RF$-$L1$ formula as
\begin{align}\label{RF1}
\prescript{\mathcal{RF}}{0}{\mathcal{D}}^{\alpha_k}_tu(x,t_k)
=&I_{his,\epsilon}(t_k)+I_{loc}(t_k)\nonumber\\
=&\frac 1 {\Gamma(1-\alpha_k)}\bigg(\frac{u(x,t_{k-1})}{\Delta t^{\alpha_k}}
  -\frac{u(x,t_0)}{t_k^{\alpha_k}}-\frac{\alpha_k}{T^{1+\alpha_k}}\sum_{i=N_*+1}^{N^*}\theta_i^{(k)}F_{k,i}\bigg)\nonumber\\
 &+\frac{u(x,t_{k})-u(x,t_{k-1})}{\Delta t^{\alpha_k}\Gamma(2-\alpha_k)},\ \ k=2,3,\ldots,n.
\end{align}
Recalling (\ref{RF0}), we have
\begin{align}\label{RF2}
\prescript{\mathcal{RF}}{0}{\mathcal{D}}^{\alpha_1}_tu(x,t_1)
=\frac{u(x,t_1)-u(x,t_0)}{\Delta t^{\alpha_1}\Gamma(2-\alpha_1)}.
\end{align}
Summarizing all this activity, we give the following algorithm to show the detailed instruction for the implementation of the robust fast algorithm for approximating the VO Caputo fractional derivative.

\begin{algorithm}[H]
\caption{Robust fast algorithm to approximate VO Caputo fractional derivative gradually}
\label{alg-fast}
\begin{algorithmic}[1]
\State Give the time step $\Delta t$, the expected accuracy $\epsilon$ and set $h,N_*,N^*$ correspondingly
\State Compute $\prescript{\mathcal{RF}}{0}{\mathcal{D}}_t^{\alpha_1}u(x,t_1)$ by formula (\ref{RF2})
\State Set $\{F_{1,i}=0\}_{i=N_*+1}^{N^*}$ and $\left\{\lambda_i=e^{ih}\right\}_{i=N_*+1}^{N^*}$
\For{$k=2,3,\ldots,n$}
\State Set $\left\{\theta_i^{(k)}=\frac{he^{(1+\alpha_k)ih}}{\Gamma(1+\alpha_k)}\right\}_{i=N_*+1}^{N^*}$ and update $\{F_{k,i}\}_{i=N_*+1}^{N^*}$ by formula (\ref{y-compute}) 
\State Compute $\prescript{\mathcal{RF}}{0}{\mathcal{D}}_t^{\alpha_k}u(x,t_k)$ by formula (\ref{RF1}) using $\{F_{k,i},\theta_i^{(k)},\lambda_i\}_{i=N_*+1}^{N^*}$
\EndFor
\end{algorithmic}
\end{algorithm}

\begin{remark}\label{memory}
At each time level, it only needs $\mathcal{O}(1)$ computational cost to compute $F_{k,i}$ since $F_{k-1,i}$ is known in advance. In total the robust fast algorithm requires only $\mathcal{O}(\log^2n)$ memory and $\mathcal{O}(n\log^2n)$ computational cost when numerically discretize the VO Caputo fractional derivative. The proposed method provides an efficient tool to approximate the VO Caputo fractional derivative.
\end{remark}

\subsection{Truncation error}\label{truncation}
In this subsection, we study the truncation error of $RF$-$L1$ formula (\ref{RF1})--(\ref{RF2}) to the VO Caputo fractional derivative. To investigate the truncation error of $RF$-$L1$ formula to the VO Caputo fractional derivative, we first give the following lemma to state the error bound of $RF$-$L1$ formula to $L1$ formula.
\begin{lemma}\label{L-truncation-L1-RF}
Suppose \textbf{Conditions A} and \textbf{B} hold, $\alpha(t) \in C^1[0,T]$, $\varphi\in \check{H}^4$, $f\in H^1(0,T;\check{H}^2)$ $\bigcap H^2(0,T;L_2)$. Let $L1$ formula be as (\ref{L1}), $RF$-$L1$ formula be defined by (\ref{RF1})--(\ref{RF2}), $\epsilon$ be the expected accuracy, then
\begin{align}\label{error-L1-RF}
\Big\|\prescript{}{0}{\mathcal{D}}^{\alpha_k}_tu(x,t_k)-\prescript{\mathcal{RF}}{0}{\mathcal{D}}^{\alpha_k}_tu(x,t_k)\Big\|
=\mathcal{O}\left(\epsilon\Delta t^{-\alpha_k}\right).
\end{align}
\end{lemma}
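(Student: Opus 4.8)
The plan is to reduce the claimed bound to the error committed in approximating the \emph{history} part alone. By construction $L1$ formula (\ref{L1}) coincides \emph{exactly} with $I_{his}(t_k)+I_{loc}(t_k)$ in the splitting (\ref{HL}) (splitting $\int_0^{t_k}$ into $\int_0^{t_{k-1}}+\int_{t_{k-1}}^{t_k}$ is an identity), while $RF$-$L1$ formula (\ref{RF1})--(\ref{RF2}) equals $I_{his,\epsilon}(t_k)+I_{loc}(t_k)$, and the local parts are identical. Hence
\[
\prescript{}{0}{\mathcal{D}}^{\alpha_k}_tu(x,t_k)-\prescript{\mathcal{RF}}{0}{\mathcal{D}}^{\alpha_k}_tu(x,t_k)=I_{his}(t_k)-I_{his,\epsilon}(t_k),
\]
which vanishes for $k=1$ since $I_{his}(t_1)=0$. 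For $k\ge 2$, comparing (\ref{his}) with (\ref{his-fast}) I note that the two boundary contributions $u(x,t_{k-1})/\Delta t^{\alpha_k}$ and $u(x,t_0)/t_k^{\alpha_k}$ cancel identically, so only the difference between the exact power-kernel integral and its exponential-sum surrogate survives, because $\sum_i\theta_i^{(k)}F_{k,i}=\int_0^{t_{k-1}}L_1(\tau)\sum_i\theta_i^{(k)}e^{-(t_k-\tau)\lambda_i/T}\mathrm{d}\tau$ by (\ref{y}). The whole estimate thus collapses to controlling a single integral weighted by the kernel error.

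Next I would invoke Lemma \ref{L-soe} with $\beta_k=1+\alpha_k$. Under \textbf{Condition A} one has $\beta_k\in[1+\alpha_*,1+\alpha^*]\subset[1,2)$, and for $\tau\in[0,t_{k-1}]$ the scaled argument obeys $\Delta t/T\le(t_k-\tau)/T\le 1$, so the pointwise bound
\[
\Big|\big((t_k-\tau)/T\big)^{-1-\alpha_k}-\sum_{i=N_*+1}^{N^*}\theta_i^{(k)}e^{-\lambda_i(t_k-\tau)/T}\Big|\le\epsilon\,\big((t_k-\tau)/T\big)^{-1-\alpha_k}
\]
holds uniformly in $\tau$. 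Passing the $L_2(\Omega)$ norm inside the Bochner integral by the triangle inequality and using $\|L_1(\tau)\|\le\max_{0\le t\le T}\|u(x,t)\|\le c$ --- furnished by the regularity $u\in C^1([0,T];\check{H}^\gamma)$ of Lemma \ref{T-u} --- I reduce the matter to the scalar estimate
\[
\big\|I_{his}(t_k)-I_{his,\epsilon}(t_k)\big\|\le\frac{c\,\epsilon}{\Gamma(1-\alpha_k)}\,\frac{\alpha_k}{T^{1+\alpha_k}}\int_0^{t_{k-1}}\Big(\frac{t_k-\tau}{T}\Big)^{-1-\alpha_k}\mathrm{d}\tau.
\]

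The remaining integral I evaluate explicitly via $s=(t_k-\tau)/T$, which produces $\tfrac{T}{\alpha_k}\big[(\Delta t/T)^{-\alpha_k}-(t_k/T)^{-\alpha_k}\big]$; the prefactor $\alpha_k$ then cancels the $1/\alpha_k$ generated by integrating the $-1-\alpha_k$ power, giving
\[
\frac{\alpha_k}{T^{1+\alpha_k}}\int_0^{t_{k-1}}\Big(\frac{t_k-\tau}{T}\Big)^{-1-\alpha_k}\mathrm{d}\tau=\Delta t^{-\alpha_k}-t_k^{-\alpha_k}\le\Delta t^{-\alpha_k}.
\]
Since $\Gamma(1-\alpha_k)$ is bounded below by a positive constant for $\alpha_k\in[0,\alpha^*]$, this delivers $\|I_{his}(t_k)-I_{his,\epsilon}(t_k)\|\le c\,\epsilon\,\Delta t^{-\alpha_k}$, which is precisely (\ref{error-L1-RF}).

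The step I expect to demand the most care is guaranteeing that the estimate stays uniformly valid as $\alpha_*\downarrow 0$, which is the central purpose of $RF$-$L1$ formula. Here the shift $\beta_k=1+\alpha_k$ keeps $\beta_*=1+\alpha_*\ge 1$ bounded away from $0$, so the parameters $h,N_*,N^*$ in (\ref{parameter-RF}) and the count $N_\epsilon$ remain finite even when $\alpha_*=0$ --- in sharp contrast to $F$-$L1$ formula, whose $\underline{N}$ in (\ref{parameter-F}) carries a $1/\alpha_*$ factor that blows up. One must also confirm the cancellation above is an exact algebraic identity rather than a limit: written as $\Delta t^{-\alpha_k}-t_k^{-\alpha_k}$ it is manifestly finite for every $\alpha_k\ge 0$ and in fact tends to $0$ like $\alpha_k\log(t_k/\Delta t)$ as $\alpha_k\to 0$, so no genuine $0/0$ arises. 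Finally I would keep the generic constant $c$ (absorbing $\|u\|_{C([0,T];L_2)}$) independent of $k$, $\Delta t$, and $\alpha_k$, noting that only the uniform $L_2$-boundedness of the solution --- and no pointwise-in-time regularity --- is needed, consistent with the paper's aim.
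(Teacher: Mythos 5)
Your proposal is correct and follows essentially the same route as the paper's own proof: reduce the difference to the history part (the local parts and the boundary terms $u(x,t_{k-1})/\Delta t^{\alpha_k}$, $u(x,t_0)/t_k^{\alpha_k}$ cancel), invoke Lemma \ref{L-soe} with $\beta_k=1+\alpha_k$, bound $\|L_1(\tau)\|$ by the regularity of Lemma \ref{T-u}, and use $\alpha_k\int_0^{t_{k-1}}(t_k-\tau)^{-1-\alpha_k}\,\mathrm{d}\tau=\Delta t^{-\alpha_k}-t_k^{-\alpha_k}\le\Delta t^{-\alpha_k}$. Your explicit evaluation of this integral and the remark on uniformity as $\alpha_*\downarrow 0$ merely make explicit steps the paper leaves implicit.
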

\begin{proof}
For $k=1$, we obtain the lemma directly by (\ref{RF0}). For $k=2,3,\ldots,n$, obviously, the only difference between $\prescript{\mathcal{RF}}{0}{\mathcal{D}}^{\alpha_k}_tu(x,t_k)$  and $\prescript{}{0}{\mathcal{D}}^{\alpha_k}_tu(x,t_k)$ is the approximation to the history part. According to \eqref{his} and \eqref{app-his}, the error can be estimated by
\begin{align*}
&\Big\|\prescript{}{0}{\mathcal{D}}^{\alpha_k}_tu(x,t_k)-\prescript{\mathcal{RF}}{0}{\mathcal{D}}^{\alpha_k}_tu(x,t_k)\Big\|\\
=&\frac {\alpha_k}{T^{1+\alpha_k}\Gamma(1-\alpha_k)}\left\| \int_{0}^{t_{k-1}} L_1(\tau)\bigg(\sum\limits_{i=N_*+1}^{N^*}\theta_i^{(k)}e^{-(t_k-\tau)\lambda_i/T}
  -\Big(\frac{t_k-\tau}T\Big)^{-1-\alpha_k}\bigg)\mathrm{d}\tau\right\|.
\end{align*}
By Lemma \ref{L-soe}, we have
\begin{align*}
\Big\|\prescript{}{0}{\mathcal{D}}^{\alpha_k}_tu(x,t_k)-\prescript{\mathcal{RF}}{0}{\mathcal{D}}^{\alpha_k}_tu(x,t_k)\Big\|
\leq&\frac {\alpha_k}{\Gamma(1-\alpha_k)}\epsilon\left\|\int_{0}^{t_{k-1}}\frac{L_1(\tau)}{(t_k-\tau)^{1+\alpha_k}}\mathrm{d}\tau\right\|\\
\leq&\frac {1}{\Gamma(1-\alpha_k)}\epsilon \|u\|_{C([0,T];\check{H}^s)}\Delta t^{-\alpha_k} \\
=& \mathcal{O}\left(\epsilon\Delta t^{-\alpha_k}\right),
\end{align*}
where we apply Lemma \ref{T-u}, which guarantees the boundedness of $\|u\|_{C([0,T];\check{H}^s)}$. The proof is completed.
\end{proof}

We obtain the following theorem to estimate the truncation error of $RF$-$L1$ formula to the VO Caputo farctional derivative.
\begin{theorem}\label{truncation-error}
Suppose \textbf{Conditions A} and \textbf{B} hold, $\alpha(t) \in C^1[0,T]$, $\varphi\in \check{H}^4$, $f\in H^1(0,T;\check{H}^2)$ $\bigcap$ $ H^2(0,T;L_2)$.  Let the VO Caputo fractional derivative at $t_k$ be as (\ref{caputo}), its $RF$-$L1$ formula be defined by (\ref{RF1})--(\ref{RF2}) and $\epsilon\leq\mathcal{O}(\Delta t^{1+\alpha^*})$ be the expected accuracy.
If $\alpha(0)>0$,
\begin{align*}
\Big\|\prescript{C}{0}{\mathcal{D}}^{\alpha_k}_tu(x,t_k)-\prescript{\mathcal{RF}}{0}{\mathcal{D}}^{\alpha_k}_tu(x,t_k)\Big\|
\leq ck^{-\alpha^*}\Delta t^{1-\alpha^*};
\end{align*}
if $\alpha(0)=0$,
\begin{align*}
\left\|\prescript{C}{0}{\mathcal{D}}^{\alpha(t)}_tu(x,t)-\prescript{\mathcal{RF}}{0}{\mathcal{D}}^{\alpha(t)}_tu(x,t)\right\|_{\widehat{L}_\infty(0,T;L_2)}\leq c\Delta t.
\end{align*}

\end{theorem}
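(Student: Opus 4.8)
The plan is to introduce the ordinary $L1$ discretization $\prescript{}{0}{\mathcal{D}}^{\alpha_k}_tu(x,t_k)$ as an intermediate quantity and split the error by the triangle inequality:
\begin{align*}
\Big\|\prescript{C}{0}{\mathcal{D}}^{\alpha_k}_tu(x,t_k)-\prescript{\mathcal{RF}}{0}{\mathcal{D}}^{\alpha_k}_tu(x,t_k)\Big\|
\leq&\Big\|\prescript{C}{0}{\mathcal{D}}^{\alpha_k}_tu(x,t_k)-\prescript{}{0}{\mathcal{D}}^{\alpha_k}_tu(x,t_k)\Big\|\\
&+\Big\|\prescript{}{0}{\mathcal{D}}^{\alpha_k}_tu(x,t_k)-\prescript{\mathcal{RF}}{0}{\mathcal{D}}^{\alpha_k}_tu(x,t_k)\Big\|.
\end{align*}
The first term is precisely the truncation error of $L1$ formula, already controlled by Lemma \ref{truncation-L1}: it is bounded by $ck^{-\alpha^*}\Delta t^{1-\alpha^*}$ when $\alpha(0)>0$, and by $c\Delta t$ in the $\widehat{L}_\infty(0,T;L_2)$ norm when $\alpha(0)=0$. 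The second term is the discrepancy between $RF$-$L1$ formula and $L1$ formula, which by Lemma \ref{L-truncation-L1-RF} equals $\mathcal{O}(\epsilon\Delta t^{-\alpha_k})$; crucially, that lemma already absorbs the $\|u\|_{C([0,T];\check{H}^s)}$ factor via Lemma \ref{T-u}, so no regularity of the true solution is required here.

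The remaining work is to show that the $RF$-to-$L1$ contribution is dominated by the two target rates once the hypothesis $\epsilon\leq\mathcal{O}(\Delta t^{1+\alpha^*})$ is inserted. Substituting gives $\mathcal{O}(\epsilon\Delta t^{-\alpha_k})\leq\mathcal{O}(\Delta t^{1+\alpha^*-\alpha_k})$; since $\alpha_k\leq\alpha^*$ and $\Delta t\leq1$, we have $\Delta t^{1+\alpha^*-\alpha_k}\leq\Delta t$, so this term is $\mathcal{O}(\Delta t)$. In the case $\alpha(0)=0$ this matches the $c\Delta t$ bound after taking the maximum over $k$, and combining the two pieces yields the stated $\widehat{L}_\infty$ estimate. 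In the case $\alpha(0)>0$ I would rewrite $\Delta t=t_k^{\alpha^*}k^{-\alpha^*}\Delta t^{1-\alpha^*}$ and use $t_k=k\Delta t\leq T$ to obtain $\Delta t\leq T^{\alpha^*}k^{-\alpha^*}\Delta t^{1-\alpha^*}\leq ck^{-\alpha^*}\Delta t^{1-\alpha^*}$, so the fast-approximation term is of the same order as the $L1$ truncation rate, and the sum is again $ck^{-\alpha^*}\Delta t^{1-\alpha^*}$.

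The main obstacle is purely bookkeeping: verifying that the fast-approximation error $\mathcal{O}(\epsilon\Delta t^{-\alpha_k})$, which carries the \emph{variable} exponent $\alpha_k$, can be uniformly reabsorbed into the $k$-dependent $L1$ rate $ck^{-\alpha^*}\Delta t^{1-\alpha^*}$. The two observations that make this work are that $\alpha_k\leq\alpha^*$ (so $\Delta t^{-\alpha_k}\leq\Delta t^{-\alpha^*}$ for $\Delta t\leq1$), and that $t_k=k\Delta t\leq T$ lets one trade a factor $\Delta t^{\alpha^*}$ for $k^{-\alpha^*}$ at the cost of the harmless constant $T^{\alpha^*}$. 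Everything else follows from the two cited lemmas, so the argument is short and reduces to choosing $\epsilon$ on the order of $\Delta t^{1+\alpha^*}$ exactly so that the acceleration error does not degrade the first-order temporal accuracy of $L1$ formula.
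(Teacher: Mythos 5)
Your proof is correct and is essentially the paper's own argument: the same triangle-inequality split through the $L1$ formula $\prescript{}{0}{\mathcal{D}}^{\alpha_k}_tu(x,t_k)$, followed by Lemma \ref{truncation-L1} and Lemma \ref{L-truncation-L1-RF}. In fact you supply more detail than the paper, which ends with ``the desired result now follows'' and omits the bookkeeping you carry out explicitly, namely that $\epsilon\leq\mathcal{O}(\Delta t^{1+\alpha^*})$ and $\alpha_k\leq\alpha^*$ give $\epsilon\Delta t^{-\alpha_k}\leq c\Delta t$, and that $\Delta t\leq T^{\alpha^*}k^{-\alpha^*}\Delta t^{1-\alpha^*}$ absorbs this into the rate for $\alpha(0)>0$.
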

\begin{proof}
The triangle inequality leads to
\begin{align*}
\Big\|\prescript{C}{0}{\mathcal{D}}^{\alpha_k}_tu(x,t_k)-\prescript{\mathcal{RF}}{0}{\mathcal{D}}^{\alpha_k}_tu(x,t_k)\Big\|
\leq & \Big\|\prescript{C}{0}{\mathcal{D}}^{\alpha_k}_tu(x,t_k)-\prescript{}{0}{\mathcal{D}}^{\alpha_k}_tu(x,t_k)\Big\|\\
     &+ \Big\|\prescript{}{0}{\mathcal{D}}^{\alpha_k}_tu(x,t_k)-\prescript{\mathcal{RF}}{0}{\mathcal{D}}^{\alpha_k}_tu(x,t_k)\Big\|.
\end{align*}
The desired result now follows on recalling Lemma \ref{truncation-L1} and  Lemma \ref{L-truncation-L1-RF}.
\end{proof}

\subsection{Properties of discrete kernels}
To simplify the further study of the convergence of the finite difference scheme, we first rewrite (\ref{RF1})--(\ref{RF2}) using (\ref{y}) into another form. For convenience, we denote \begin{align*}
s^{(k)}=\frac {\Delta t^{-\alpha_k}}{\Gamma(2-\alpha_k)},\ \ k=1,2,\ldots,n.
\end{align*}
By rearranging, it can be rewritten as
\begin{align}\label{re-fast}
\prescript{\mathcal{RF}}{0}{\mathcal{D}}^{\alpha_k}_tu(x,t_k)
=s^{(k)}\Big(u(x,t_k)-\sum_{l=0}^{k-1}d_l^{(k)}u(x,t_l)\Big),\ \ k=1,2,\ldots,n,
\end{align}
where
\begin{align*}
&d_0^{(k)}=(1-\alpha_k)\bigg(k^{-\alpha_k}+\frac{\alpha_k}{T^{1+\alpha_k}} \Delta t^{\alpha_k-1} \sum_{i=N_*+1}^{N^*}\theta_i^{(k)}\int_{t_0}^{t_1}(t_1-\tau)e^{-\lambda_i(t_k-\tau)/T} \mathrm{d}\tau\bigg),\\
&d_{k-1}^{(k)}=\alpha_k+\frac{\alpha_k(1-\alpha_k)}{T^{1+\alpha_k}} \Delta t^{\alpha_k-1} \sum_{i=N_*+1}^{N^*}\theta_i^{(k)}\int_{t_{k-2}}^{t_{k-1}}(\tau-t_{k-2})e^{-\lambda_i(t_k-\tau)/T} \mathrm{d}\tau,
\end{align*}
and
\begin{align*}
d_l^{(k)}
=&\frac{\alpha_k(1-\alpha_k)}{T^{1+\alpha_k}} \Delta t^{\alpha_k-1}\sum_{i=N_*+1}^{N^*}\theta_i^{(k)}\bigg(\int_{t_{l-1}}^{t_l}(\tau-t_{l-1})e^{-\lambda_i(t_k-\tau)/T} \mathrm{d}\tau\\
 &+\int_{t_{l}}^{t_{l+1}}(t_{l+1}-\tau)e^{-\lambda_i(t_k-\tau)/T} \mathrm{d}\tau\bigg),\ \ l=1,2,\ldots,k-2.
\end{align*}
In particular $d_0^{(1)}=1$.

Meanwhile, these coefficients satisfy the following lemma.
\begin{lemma}\label{coe}
Let $\{d_l^{(k)}\}_{l=0}^{k-1} (k=1,2,\ldots,n)$ be defined by (\ref{re-fast}) and $\epsilon$ be the expected accuracy. Then, we have
\begin{align*}
&(1)d_l^{(k)}>0;\\
&(2)\sum_{l=0}^{k-1}d_l^{(k)}\leq 1+\epsilon.
\end{align*}
\end{lemma}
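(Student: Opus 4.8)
The plan is to handle the two claims by different means: the positivity (1) by a direct sign inspection of the explicit coefficients, and the upper bound (2) by testing the rearranged formula \eqref{re-fast} on the constant function and then invoking the relative error estimate of the ESA in Lemma \ref{L-soe}.

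For (1) I would first record that every ingredient of $d_0^{(k)}$, $d_l^{(k)}$ and $d_{k-1}^{(k)}$ is nonnegative. Since $\epsilon\le 1/e$ and $\cos 1<1$, the denominator of $h$ in \eqref{parameter-RF} is positive, so $h>0$; therefore $\lambda_i=e^{ih}>0$ and $\theta_i^{(k)}=he^{(1+\alpha_k)ih}/\Gamma(1+\alpha_k)>0$, and the weights $e^{-\lambda_i(t_k-\tau)/T}$ are positive. On each subinterval the linear factors $(t_1-\tau)$, $(\tau-t_{l-1})$, $(t_{l+1}-\tau)$, $(\tau-t_{k-2})$ are nonnegative, while $0\le\alpha_k<1$ gives $1-\alpha_k>0$, $\alpha_k\ge0$, and $k^{-\alpha_k},\Delta t^{\alpha_k-1},T^{-1-\alpha_k}>0$. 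Hence each coefficient is a sum of products of nonnegative quantities, so $d_l^{(k)}\ge 0$, with strict positivity once $\alpha_k>0$ (and $d_0^{(1)}=1$, $d_0^{(k)}\ge(1-\alpha_k)k^{-\alpha_k}>0$ being immediate).

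For (2), the key observation is that $\sum_{l=0}^{k-1}d_l^{(k)}$ is precisely the response of \eqref{re-fast} to a time-independent input. Feeding $u(x,t_l)\equiv 1$ into \eqref{re-fast} gives $\prescript{\mathcal{RF}}{0}{\mathcal{D}}^{\alpha_k}_t 1=s^{(k)}\big(1-\sum_{l=0}^{k-1}d_l^{(k)}\big)$, so it remains to bound this quantity from below. Substituting the same constant into the equivalent form \eqref{RF1} (where $L_1\equiv1$ and $I_{loc}$ vanishes) reduces the task to comparing
\[S:=\frac{\alpha_k}{T^{1+\alpha_k}}\int_0^{t_{k-1}}\sum_{i=N_*+1}^{N^*}\theta_i^{(k)}e^{-\lambda_i(t_k-\tau)/T}\,\mathrm{d}\tau\]
with the explicit boundary terms $\Delta t^{-\alpha_k}-t_k^{-\alpha_k}$. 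The crucial computation is that the \emph{exact} kernel integrates to exactly these terms: with $s=t_k-\tau$,
\[\frac{\alpha_k}{T^{1+\alpha_k}}\int_0^{t_{k-1}}\Big(\frac{t_k-\tau}{T}\Big)^{-1-\alpha_k}\mathrm{d}\tau=\alpha_k\int_{\Delta t}^{t_k}s^{-1-\alpha_k}\,\mathrm{d}s=\Delta t^{-\alpha_k}-t_k^{-\alpha_k}.\]
Thus if the ESA were exact, $\prescript{\mathcal{RF}}{0}{\mathcal{D}}^{\alpha_k}_t 1$ would vanish and $\sum_l d_l^{(k)}=1$.

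Applying the pointwise relative bound of Lemma \ref{L-soe} under the $\tau$-integral then yields $|S-(\Delta t^{-\alpha_k}-t_k^{-\alpha_k})|\le\epsilon(\Delta t^{-\alpha_k}-t_k^{-\alpha_k})$, and since $s^{(k)}=\Delta t^{-\alpha_k}/[(1-\alpha_k)\Gamma(1-\alpha_k)]$ one obtains
\[1-\sum_{l=0}^{k-1}d_l^{(k)}\ge-\epsilon(1-\alpha_k)\big(1-(\Delta t/t_k)^{\alpha_k}\big)\ge-\epsilon,\]
using $\Delta t\le t_k$ and $0\le\alpha_k<1$; this is exactly $\sum_{l=0}^{k-1}d_l^{(k)}\le1+\epsilon$, with the case $k=1$ trivial since $d_0^{(1)}=1$. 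The main obstacle I anticipate is bookkeeping rather than estimation: one must check that $u\equiv1$ yields the same value in \eqref{re-fast} and in \eqref{RF1}, so that the identity for $\sum_l d_l^{(k)}$ is legitimate, and that the relative ESA error passes through the integral with the correct sign to land on $-\epsilon$ rather than a larger multiple. The clean cancellation of the exact kernel against $\Delta t^{-\alpha_k}-t_k^{-\alpha_k}$ is what pins the constant to exactly $1+\epsilon$, so securing that cancellation is the heart of the argument.
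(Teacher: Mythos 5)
Your proof is correct and is essentially the paper's own argument: your evaluation of \eqref{re-fast} at $u\equiv 1$ is algebraically identical to the paper's direct summation of the $d_l^{(k)}$ (the linear factors on each subinterval $[t_j,t_{j+1}]$ summing to $\Delta t$), and both proofs then combine the two-sided ESA bound of Lemma \ref{L-soe} with the exact evaluation $\alpha_k\Delta t^{\alpha_k}\int_0^{t_{k-1}}(t_k-\tau)^{-1-\alpha_k}\,\mathrm{d}\tau=1-k^{-\alpha_k}$ to arrive at the same final bound $1+\epsilon(1-\alpha_k)\bigl(1-k^{-\alpha_k}\bigr)\leq 1+\epsilon$. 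Your caveat on claim (1) --- that one only gets $d_l^{(k)}\geq 0$ in general, with strict positivity when $\alpha_k>0$ --- is in fact more careful than the paper, which dismisses (1) as a ``straightforward calculation'' even though the coefficients $d_l^{(k)}$ for $1\leq l\leq k-1$ vanish when $\alpha_k=0$, a case permitted by \textbf{Condition A}.
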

\begin{proof}
(1) This conclusion can be obtained by a straight forward calculation.\\
(2) Summing up $d_l^{(k)}$ for $l$ from $0$ to $k-1$, and rearranging the integral terms, we have
\begin{align*}
\sum_{l=0}^{k-1}d_l^{(k)}
=&(1-\alpha_k)k^{-\alpha_k}+\alpha_k\\
 &+\frac{\alpha_k(1-\alpha_k)}{T^{1+\alpha_k}}\Delta t^{\alpha_k-1}\sum_{j=0}^{k-2}\int_{t_j}^{t_{j+1}}(t_{j+1}-t_j)
  \sum_{i=N_*+1}^{N^*}\theta_i^{(k)}e^{-\lambda_i(t_k-\tau)}\mathrm{d}\tau\\
=&(1-\alpha_k)k^{-\alpha_k}+\alpha_k+\frac{\alpha_k(1-\alpha_k)}{T^{1+\alpha_k}}\Delta t^{\alpha_k}\int_{0}^{t_{k-1}}
  \sum_{i=N_*+1}^{N^*}\theta_i^{(k)}e^{-\lambda_i(t_k-\tau)}\mathrm{d}\tau\\
\leq &(1-\alpha_k)k^{-\alpha_k}+\alpha_k+(1+\epsilon)(1-\alpha_k)\alpha_k\Delta t^{\alpha_k}\int_{0}^{t_{k-1}}(t_k-\tau)^{-1-\alpha_k}\mathrm{d}\tau\\
=&(1-\alpha_k)k^{-\alpha_k}+\alpha_k+(1+\epsilon)(1-\alpha_k)(1-k^{-\alpha_k})\\
=&1+\epsilon-\alpha_k\epsilon+\alpha_k\epsilon k ^{-\alpha_k}- \epsilon k ^{-\alpha_k}\\
\leq &1+\epsilon,
\end{align*}
where we use the estimate
\begin{align*}
(1-\epsilon)t^{-1-\alpha_k}
\leq \sum_{i=N_*+1}^{N^*}\theta_i^{(k)}e^{-\lambda_it}
\leq (1+\epsilon)t^{-1-\alpha_k}.
\end{align*}
The proof is completed.
\end{proof}

\section{Finite difference scheme for VO tFDE}\label{finite-difference-scheme}
In this section, we propose a fast finite difference scheme for solving \eqref{E1}--\eqref{E3} and prove its error estimates.

We first discretize the first-order partial derivative $u_t$ by
\begin{align}\label{ut}
\frac {\partial}{\partial t}u(x,t_k)
=\frac{u(x,t_k)-u(x,t_{k-1})}{\Delta t}+E^k :=\Delta_tu(x,t_k)+E^k,
\end{align}
where $E^k$ satisfies the following lemma.
\begin{lemma}\label{center-err}\cite{Zheng-2020}
Suppose \textbf{Conditions A} and \textbf{B} hold, $\alpha(t) \in C^1[0,T]$, $\varphi\in \check{H}^4$ and $f\in H^1(0,T;\check{H}^2)\bigcap H^2(0,T;L_2)$. If $\alpha(0)>0$,
 \begin{align*}
\big\|E^k\big\|
\leq ck^{-\alpha(0)}\Delta t^{1-\alpha(0)};
\end{align*}
if $\alpha(0)=0$,
\begin{align*}
\big\|E\big\|_{\widehat{L}_\infty(0,T;L_2)}
\leq c\Delta t.
\end{align*}
\end{lemma}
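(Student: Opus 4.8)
The plan is to prove Lemma~\ref{center-err} by recognizing that $E^k$ is precisely the truncation error of the backward difference quotient $\Delta_t u(x,t_k)$ as an approximation to $u_t(x,t_k)$. Writing this out via Taylor expansion with integral remainder gives
\begin{align*}
E^k=u_t(x,t_k)-\frac{u(x,t_k)-u(x,t_{k-1})}{\Delta t}
=-\frac{1}{\Delta t}\int_{t_{k-1}}^{t_k}(\tau-t_{k-1})\,u_{tt}(x,\tau)\,\mathrm{d}\tau,
\end{align*}
so the estimate reduces to controlling $\|u_{tt}(\cdot,\tau)\|$ on the interval $[t_{k-1},t_k]$ in the $L_2$ (equivalently $\check{H}^s$) norm. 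This is where the regularity results cited earlier in the excerpt enter: the second lemma of Section~\ref{Preliminaries} supplies exactly the pointwise-in-time bounds on $u_{tt}$ that are needed, under the stated hypotheses $\alpha(t)\in C^1[0,T]$, $\varphi\in\check{H}^4$, and $f\in H^1(0,T;\check{H}^2)\cap H^2(0,T;L_2)$.

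First I would treat the case $\alpha(0)>0$. Here the regularity lemma gives the singular bound $\|u_{tt}(\cdot,\tau)\|\leq c\,\tau^{-\alpha(0)}$ for $\tau\in(0,T]$. Substituting into the remainder formula and using $|\tau-t_{k-1}|\leq\Delta t$ on $[t_{k-1},t_k]$, I would estimate
\begin{align*}
\|E^k\|\leq\frac{1}{\Delta t}\int_{t_{k-1}}^{t_k}(\tau-t_{k-1})\,c\,\tau^{-\alpha(0)}\,\mathrm{d}\tau
\leq c\int_{t_{k-1}}^{t_k}\tau^{-\alpha(0)}\,\mathrm{d}\tau,
\end{align*}
and then bound the integral. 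The key mechanism is that on $[t_{k-1},t_k]$ the factor $\tau^{-\alpha(0)}$ is largest at the left endpoint, so $\int_{t_{k-1}}^{t_k}\tau^{-\alpha(0)}\,\mathrm{d}\tau\leq \Delta t\cdot t_{k-1}^{-\alpha(0)}$, which after writing $t_{k-1}=(k-1)\Delta t$ and absorbing the $(k-1)$ versus $k$ discrepancy into the constant yields the claimed $ck^{-\alpha(0)}\Delta t^{1-\alpha(0)}$. Some care is needed for $k=1$, where $t_{k-1}=0$ and the integrand is genuinely singular; there one integrates $\tau^{-\alpha(0)}$ directly from $0$ to $\Delta t$, which is finite since $\alpha(0)<1$, and checks that the resulting bound is consistent with $k=1$ in the stated estimate.

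For $\alpha(0)=0$, the regularity lemma instead gives a uniform bound $\|u_{tt}\|_{C([0,T];\check{H}^s)}\leq c$, with no temporal singularity. In that case the remainder formula immediately gives $\|E^k\|\leq\frac{1}{\Delta t}\int_{t_{k-1}}^{t_k}(\tau-t_{k-1})\,c\,\mathrm{d}\tau=\frac{c}{2}\Delta t$, uniformly in $k$, which is exactly the $\widehat{L}_\infty(0,T;L_2)$ bound $c\Delta t$. The main obstacle, such as it is, lies not in the calculus but in handling the endpoint singularity at $k=1$ in the first case and in confirming that the norm used in the regularity estimate ($\check{H}^s$) dominates the $L_2$ norm needed here, so that the cited smoothing lemma applies verbatim; once the correct regularity lemma is invoked these are routine. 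Since this lemma is attributed to \cite{Zheng-2020}, I would expect the authors' proof to be a brief pointer to that reference together with the Taylor-remainder reduction above.
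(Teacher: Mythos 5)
Your proposal is correct, and it matches the situation in the paper exactly: the paper offers no proof of Lemma~\ref{center-err} at all, citing it verbatim from \cite{Zheng-2020}, so your reconstruction via the Taylor integral remainder plus the second regularity lemma of Section~\ref{Preliminaries} is precisely the standard argument behind the cited result. All the ingredients check out: with $s=0$ the hypotheses of that regularity lemma ($\varphi\in\check{H}^4$, $f\in H^1(0,T;\check{H}^2)\cap H^2(0,T;L_2)$) coincide with those of Lemma~\ref{center-err}, so the pointwise bound $\|u_{tt}(\cdot,\tau)\|\leq c\tau^{-\alpha(0)}$ (taking $\theta=\tau$) is available in the $L_2$ norm directly; for $k\geq 2$ your left-endpoint bound gives $c(k-1)^{-\alpha(0)}\Delta t^{1-\alpha(0)}$ and the factor $(k-1)^{-\alpha(0)}\leq 2^{\alpha(0)}k^{-\alpha(0)}$ absorbs into $c$; for $k=1$ the integral $\int_0^{\Delta t}\tau^{-\alpha(0)}\,\mathrm{d}\tau=\Delta t^{1-\alpha(0)}/(1-\alpha(0))$ is finite because $\alpha(0)\leq\alpha^*<1$ and is consistent with the stated estimate at $k=1$; and in the case $\alpha(0)=0$ the uniform $C^2([0,T];\check{H}^s)$ bound yields $\|E^k\|\leq c\Delta t$ uniformly in $k$. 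The only blemish is a harmless sign error in your remainder formula (the correct identity is $E^k=\frac{1}{\Delta t}\int_{t_{k-1}}^{t_k}(\tau-t_{k-1})u_{tt}(x,\tau)\,\mathrm{d}\tau$), which does not affect any norm estimate.
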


Let $m$ be a positive integer, $\Delta x=(x_r-x_l)/m$ be the size of spatial grid, and define a spatial partition $x_j=x_l+j\Delta x$ for $j=0,1,\ldots,m$. Denote $x_{j+1/2}=(x_{j+1}+x_j)/2$ as the midpoint of the neighboring nodes $x_{j+1}$ and $x_j$, $p_{j\pm1/2}=p(x_{j\pm1/2})$, $f_j^k=f(x_j,t_k)$ and $\varphi_j=\varphi(x_j)$.

The integer-order diffusion term is discretized by \cite{T-2013}
\begin{align}\label{spatial-derivative}
&\frac \partial{\partial x}\bigg[p(x_j)\frac {\partial u(x_j,t_k)}{\partial x}\bigg]\\
=&\frac 1 {\Delta x}\bigg(p_{j+1/2}\frac{u(x_{j+1},t_k)-u(x_j,t_k)}{\Delta x}-p_{j-1/2}\frac{u(x_j,t_k)-u(x_{j-1},t_k)}{\Delta x}\bigg)+G_j^k\nonumber\\
:=&\Delta_xu(x_j,t_k)+G_j^k,\nonumber
\end{align}
where $G_j^k=c\Delta x^2.$

Then substituting \eqref{ut}, \eqref{spatial-derivative} and \eqref{re-fast} into \eqref{E1} gives
\begin{align}
&\Delta_tu(x_j,t_k)+\zeta\prescript{\mathcal{RF}}{0}{\mathcal{D}}^{\alpha_k}_tu(x_j,t_k)\nonumber\\
=&\Delta_xu(x_j,t_k)+f_j^k-\big(E^k(x_j)+\zeta R^k(x_j)-G_j^k\big),\ \ 1\leq j \leq m-1,\ \ 1\leq k \leq n.\label{S1}
\end{align}
From the initial and boundary value conditions \eqref{E2}--\eqref{E3}, we have
\begin{align}
&u(x_j,0)=\varphi_j,\ \ 0\leq j \leq m,\label{S2}\\
&u(x_l,t_k)=u(x_r,t_k)=0,\ \ 1\leq k \leq n.\label{S3}
\end{align}

\subsection{Finite difference schemes}
Denote the approximate solution to $u(x_j,t_k)$ by $U_j^k$. Then,
\begin{align*}
\Delta_tU_j^k&=\frac{U_j^k-U_j^{k-1}}{\Delta t},\\
\Delta_xU_j^k&=\frac 1 {\Delta x}\bigg(p_{j+1/2}\frac{U_{j+1}^k-U_j^k}{\Delta x}-p_{j-1/2}\frac{U_j^k-U_{j-1}^k}{\Delta x}\bigg).
\end{align*}
We obtain $RF$-$L1$ scheme for the problem \eqref{E1}--\eqref{E3} as follows
\begin{align}
&\Delta_tU_j^k+\zeta\prescript{\mathcal{RF}}{0}{\mathcal{D}}^{\alpha_k}_tU_j^k
=\Delta_xU_j^k+f_j^k,\ \ 1\leq j \leq m-1,\ \ 1\leq k \leq n,\label{RFS1}\\
&U_j^0=\varphi_j,\ \ 0\leq j \leq m,\label{RFS2}\\
&U_0^k=U_m^k=0,\ \ 1\leq k \leq n,\label{RFS3}
\end{align}
where
\begin{align*}
\prescript{\mathcal{RF}}{0}{\mathcal{D}}^{\alpha_k}_tU_j^k
=&\frac 1 {\Gamma(1-\alpha_k)}\bigg(\frac{U_j^{k-1}}{\Delta t^{\alpha_k}}
  -\frac{U_j^0}{t_k^{\alpha_k}}-\frac{\alpha_k}{T^{1+\alpha_k}}\sum_{i=N_*+1}^{N^*}\theta_i^{(k)}F_{k,i}\bigg)\nonumber\\
 &+\frac{U_j^k-U_j^{k-1}}{\Delta t^{\alpha_k}\Gamma(2-\alpha_k)},\ \ k=2,3,\ldots,n,\\
\prescript{\mathcal{RF}}{0}{\mathcal{D}}^{\alpha_1}_tU_j^1
=&\frac{U_j^1-U_j^0}{\Delta t^{\alpha_1}\Gamma(2-\alpha_1)},
\end{align*}
in which $F_{1,i}=0$ and for $k=2,3,\ldots$,
\begin{align*}
F_{k,i}
=&e^{-\Delta t\lambda_i} F_{k-1,i}+T\frac{e^{-\Delta t\lambda_i}}{\Delta t\lambda_i^2}
  \Big(-\Delta t\lambda_i e^{-\Delta t\lambda_i}+T-Te^{-\Delta t\lambda_i}\Big)U_j^{k-2}\nonumber\\
 &+T\frac{e^{-\Delta t\lambda_i}}{\Delta t\lambda_i^2}\Big(\Delta t\lambda_i-T+Te^{-\Delta t\lambda_i}\Big)U_j^{k-1}.
\end{align*}

Similarly, we obtain $L1$ scheme as
\begin{align}
&\Delta_tU_j^k+\zeta\prescript{\mathcal{}}{0}{\mathcal{D}}^{\alpha_k}_tU_j^k
=\Delta_xU_j^k+f_j^k,\ \ 1\leq j \leq m-1,\ \ 1\leq k \leq n,\label{LS1}\\
&U_j^0=\varphi_j,\ \ 0\leq j \leq m,\label{LS2}\\
&U_0^k=U_m^k=0,\ \ 1\leq k \leq n,\label{LS3}
\end{align}
where
\begin{align*}
\prescript{ }{0}{\mathcal{D}}^{\alpha_k}_tU_j^k
=&\frac {\Delta t^{-\alpha_k}}{\Gamma(2-\alpha_k)}
  \bigg(a_0^{(k)}U_j^k-\sum\limits_{l=1}^{k-1}\Big(a_{k-l-1}^{(k)}-a_{k-l}^{(k)}\Big)U_j^l-a_{k-1}^{(k)}U_j^0\bigg),\nonumber
\end{align*}
in which $a_l^{(k)}=(l+1)^{1-\alpha_k}-l^{1-\alpha_k}$.

And $F$-$L1$ scheme  is obtained as
\begin{align}
&\Delta_tU_j^k+\zeta\prescript{\mathcal{F}}{0}{\mathcal{D}}^{\alpha_k}_tU_j^k
=\Delta_xU_j^k+f_j^k,\ \ 1\leq j \leq m-1,\ \ 1\leq k \leq n,\label{FS1}\\
&U_j^0=\varphi_j,\ \ 0\leq j \leq m,\label{FS2}\\
&U_0^k=U_m^k=0,\ \ 1\leq k \leq n,\label{FS3}
\end{align}
where
\begin{align*}
\prescript{\mathcal{F}}{0}{\mathcal{D}}^{\alpha_k}_tU_j^k
=&\frac {T^{-\alpha_k}}{\Gamma(1-\alpha_k)}\sum_{i=\underline{N}+1}^{\overline{N}}\widetilde{\theta}_i^{(k)}\widetilde{F}_{k,i}+\frac{U_j^k-U_j^{k-1}}{\Delta t^{\alpha_k}\Gamma(2-\alpha_k)},\ \ k=2,3,\ldots,n,\\
\prescript{\mathcal{F}}{0}{\mathcal{D}}^{\alpha_1}_tU_j^1=&\frac{U_j^1-U_j^0}{\Delta t^{\alpha_1}\Gamma(2-\alpha_1)},
\end{align*}
in which $\widetilde{F}_{1,i} =0$, and for $k=2,3,\ldots,$
\begin{align*}
\widetilde{F}_{k,i}
=&e^{-\widetilde{\lambda}_i\Delta t/T} \widetilde{F}_{k-1,i}
 +T\frac{e^{-\widetilde{\lambda}_i\Delta t /T}-e^{-2\widetilde{\lambda}_i\Delta t/T}}{\widetilde{\lambda}_i\Delta t}\big(U_j^{k-1}-U_j^{k-2}\big).
\end{align*}

\subsection{Convergence}\label{Stability and convergence}
Next we estimate the error without any artificial regularity assumptions of the true solution.
\begin{theorem}\label{T-stability}
Suppose \textbf{Conditions A} and \textbf{B} hold, $\alpha(t) \in C^1[0,T]$, $\varphi\in \check{H}^4$, $f\in H^1(0,T;\check{H}^2)$ $\bigcap H^2(0,T;L_2)$.  Suppose the expected accuracy $\epsilon\leq\mathcal{O}(\Delta t^{1+\alpha^*})$. Suppose $\{u(x_j,t_k)|0\leq j\leq m, 0\leq k\leq n\}$ and $\{U_j^k|0\leq j\leq m, 0\leq k\leq n\}$ are solutions of the problem (\ref{E1})--(\ref{E3}) and $RF$-$L1$ scheme (\ref{RFS1})--(\ref{RFS3}), respectively. Then
\begin{align*}
\|U-u\|_{\widehat{L}_\infty(0,T;L_2)}\leq c(\Delta t+\Delta x^2).
\end{align*}
\end{theorem}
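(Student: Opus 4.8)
The plan is to run a discrete energy argument on the error grid function $e_j^k := u(x_j,t_k)-U_j^k$. Subtracting the scheme \eqref{RFS1}--\eqref{RFS3} from the consistency identity \eqref{S1} together with \eqref{S2}--\eqref{S3} gives the error equation
\[
\Delta_t e_j^k+\zeta\,\prescript{\mathcal{RF}}{0}{\mathcal{D}}^{\alpha_k}_t e_j^k=\Delta_x e_j^k+\rho_j^k,\qquad \rho_j^k:=-\big(E^k(x_j)+\zeta R^k(x_j)-G_j^k\big),
\]
with $e_j^0=0$ and $e_0^k=e_m^k=0$. I would then rewrite the fast fractional term in the form \eqref{re-fast}, namely $\prescript{\mathcal{RF}}{0}{\mathcal{D}}^{\alpha_k}_t e^k=s^{(k)}\big(e^k-\sum_{l=0}^{k-1}d_l^{(k)}e^l\big)$, and invoke Lemma~\ref{coe} for the two structural facts $d_l^{(k)}>0$ and $\sum_{l=0}^{k-1}d_l^{(k)}\le 1+\epsilon$, which are exactly what makes the scheme stable.

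Next I would pair the error equation with $e^k$ in the discrete $L_2$ inner product. Summation by parts and the homogeneous Dirichlet data, together with $0<p_*\le p(x)$, give $-(\Delta_x e^k,e^k)\ge 0$, so the diffusion term may be dropped from the left-hand side. Bounding $(e^{k-1},e^k)$ and each $(e^l,e^k)$ by Cauchy--Schwarz, cancelling one power of $\|e^k\|$, and using $\sum_{l=0}^{k-1}d_l^{(k)}\le 1+\epsilon$ with $\|e^l\|\le E_{k-1}:=\max_{0\le l\le k-1}\|e^l\|$ yields
\[
\Big(\tfrac1{\Delta t}+\zeta s^{(k)}\Big)\|e^k\|\le\Big(\tfrac1{\Delta t}+\zeta s^{(k)}(1+\epsilon)\Big)E_{k-1}+\|\rho^k\|.
\]
Since $s^{(k)}\Delta t=\Delta t^{1-\alpha_k}/\Gamma(2-\alpha_k)$ is bounded, the amplification ratio is at most $1+c\epsilon$ and the forcing coefficient is at most $\Delta t$, so $E_k\le(1+c\epsilon)E_{k-1}+\Delta t\,\|\rho^k\|$.

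A discrete Gr\"onwall iteration then gives $E_n\le(1+c\epsilon)^nE_0+\Delta t\sum_{k=1}^n(1+c\epsilon)^{n-k}\|\rho^k\|$. Here $E_0=0$, and the hypothesis $\epsilon\le\mathcal{O}(\Delta t^{1+\alpha^*})$ forces $n\epsilon=(T/\Delta t)\,\mathcal{O}(\Delta t^{1+\alpha^*})=\mathcal{O}(\Delta t^{\alpha^*})$, so $(1+c\epsilon)^n\le e^{cn\epsilon}$ stays uniformly bounded; it remains to bound $\Delta t\sum_{k=1}^n\|\rho^k\|$. I split $\rho^k$ into its three parts and apply Lemma~\ref{center-err}, Theorem~\ref{truncation-error}, and $G_j^k=c\Delta x^2$. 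The spatial contribution sums to $cT\Delta x^2$. For $\alpha(0)>0$ the temporal errors are weighted, $\|E^k\|\le ck^{-\alpha(0)}\Delta t^{1-\alpha(0)}$ and $\|\zeta R^k\|\le ck^{-\alpha^*}\Delta t^{1-\alpha^*}$, and the key estimate $\sum_{k=1}^n k^{-\alpha^*}\le c\,n^{1-\alpha^*}$ gives $\Delta t\sum_k\|\zeta R^k\|\le c\,\Delta t^{2-\alpha^*}n^{1-\alpha^*}=cT^{1-\alpha^*}\Delta t$, and likewise for $E^k$; for $\alpha(0)=0$ the uniform bounds $c\Delta t$ sum to $cT\Delta t$. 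Collecting everything yields $\|U-u\|_{\widehat{L}_\infty(0,T;L_2)}=\max_{1\le k\le n}\|e^k\|\le E_n\le c(\Delta t+\Delta x^2)$.

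The main obstacle is the one-step inequality of the second paragraph: one must use the positivity $d_l^{(k)}>0$ and the near partition-of-unity bound $\sum_l d_l^{(k)}\le 1+\epsilon$ to fold the entire history into $E_{k-1}$ without accumulating a factor larger than $1+c\epsilon$, and the compatibility condition $\epsilon\le\mathcal{O}(\Delta t^{1+\alpha^*})$ is exactly what keeps this $\epsilon$-perturbation from spoiling the Gr\"onwall constant after $n=\mathcal{O}(\Delta t^{-1})$ steps. The secondary point, which is what delivers convergence with no regularity assumption on $u$, is the weighted sum $\sum_{k=1}^n k^{-\alpha^*}\le c\,n^{1-\alpha^*}$: the temporally singular local errors of size $\Delta t^{1-\alpha^*}$ accumulate to only $\mathcal{O}(\Delta t)$ precisely because of the decaying weights $k^{-\alpha^*}$ furnished by Theorem~\ref{truncation-error}.
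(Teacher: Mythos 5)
Your proposal is correct and takes essentially the same route as the paper's proof: the same error equation, the same use of Lemma~\ref{coe} (positivity of $d_l^{(k)}$ and $\sum_{l}d_l^{(k)}\leq 1+\epsilon$) to fold the history term after pairing with the error in $L_2$, the same Gr\"onwall-type growth factor $(1+\epsilon)^n$ kept bounded by $\epsilon\leq\mathcal{O}(\Delta t^{1+\alpha^*})$, and the same weighted summation $\Delta t\sum_{k=1}^{n}k^{-\alpha^*}\Delta t^{1-\alpha^*}\leq c\Delta t$ of the truncation errors from Theorem~\ref{truncation-error} and Lemma~\ref{center-err}. The only cosmetic difference is that you carry the running maximum $E_{k-1}$ through a standard discrete Gr\"onwall recursion, whereas the paper runs an induction on the explicit bound $\|r^{k_0}\|\leq\Delta t(1+\epsilon)^{k_0}\sum_{q=1}^{k_0}J^q$; the two are equivalent and yield the identical estimate.
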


\begin{proof}
Let $r_j^k=u_j^k-u(x_j,t_k)$ for $0\leq j\leq m$, $0\leq k\leq n$. Subtracting \eqref{RFS1}--\eqref{RFS3} from \eqref{S1}--\eqref{S3}, we obtain the error equation
\begin{align}
&\Delta_tr_j^k+\zeta\prescript{\mathcal{RF}}{0}{\mathcal{D}}^{\alpha_k}_tr_j^k
=\Delta_xr_j^k-\big(E^k(x_j)+\zeta R^k(x_j)-G_j^k\big), 1\leq j \leq m-1,1\leq k \leq n,\label{ER1}\\
&r_j^0=0,\ \ 0\leq j \leq m,\label{ER2}\\
&r_0^k=r_m^k=0,\ \ 1\leq k \leq n.\label{ER3}
\end{align}
Use $r_j^0=0$ and \eqref{re-fast} to rearrange \eqref{ER1} as
\begin{align*}
&\frac{r_j^k-r_j^{k-1}}{\Delta t}+\zeta s^{(k)}\bigg(r_j^k-\sum_{l=1}^{k-1}d_l^{(k)}r_j^l\bigg)
-\frac 1 {\Delta x}\bigg(p_{j+1/2}\frac{r_{j+1}^k-r_j^k}{\Delta x}-p_{j-1/2}\frac{r_j^k-r_{j-1}^k}{\Delta x}\bigg)\\
=&-\big(E^k(x_j)+\zeta R^k(x_j)-G_j^k\big).
\end{align*}
Making an inner product with $r^k$ on both hand sides of the equality, and from Theorem \ref{truncation-error}, Lemma \ref{center-err} and \eqref{spatial-derivative}, we obtain
\begin{align}\label{error}
\big(1+\zeta\Delta t s^{(k)}\big)\big\|r^k\big\|\leq \big\|r^{k-1}\big\|+\zeta\Delta t s^{(k)}\sum_{l=1}^{k-1}d_l^{(k)}\big\|r^l\big\|+\Delta tJ^k,
\end{align}
where $J^k\leq c_1k^{-\alpha(0)}n^{\alpha(0)-1}+c_2k^{-\alpha^*}n^{\alpha^*-1}+c_3\Delta x^2.$
It is clear from \eqref{error} and $\big\|r^0\big\|=0$ that
\begin{align*}
&\big\|r^1\big\|\leq\Delta tJ^1\leq\Delta t(1+\epsilon)J^1.
\end{align*}
Assume that
\begin{align}\label{error6}
&\big\|r^{k_0}\big\|\leq \Delta t(1+\epsilon)^{k_0}\sum\limits_{q=1}^{k_0}J^q,\ \ k_0=2,3,\ldots,k-1.
\end{align}
Using the mathematical induction, it is derived that
\begin{align*}
&\big(1+\zeta\Delta ts^{(k)}\big)\big\|r^k\big\|\nonumber\\
\leq&\Delta t(1+\epsilon)^{k-1}\sum\limits_{q=1}^{k-1}J^q
     +\zeta\Delta ts^{(k)}\sum_{l=1}^{k-1}d_l^{(k)}\Big(\Delta t(1+\epsilon)^l\sum\limits_{q=1}^lJ^q\Big)
      +\Delta tJ^k\\
\leq&\Delta t(1+\epsilon)^k\sum\limits_{q=1}^kJ^q
     +\zeta\Delta t^2s^{(k)}(1+\epsilon)^{k-1}\sum_{l=1}^{k-1}d_l^{(k)}\sum\limits_{q=1}^lJ^q\\
\leq&\Delta t(1+\epsilon)^k\sum\limits_{q=1}^kJ^q
     +\zeta\Delta t^2s^{(k)}(1+\epsilon)^k\sum\limits_{q=1}^{k-1}J^q\\
\leq&\Delta t(1+\epsilon)^k\big(1+\zeta\Delta ts^{(k)}\big)\sum\limits_{q=1}^kJ^q.
\end{align*}
Thus \eqref{error6} holds for $k=1,2,\ldots,n$ by mathematical induction. It remains to bound the right-hand side of \eqref{error6} for any $1\leq k\leq n$. We use Theorem \ref{truncation-error}, Lemma \ref{center-err} and \eqref{spatial-derivative} again to conclude that
\begin{align*}
\big\|r^n\big\|\leq\Delta t(1+\epsilon)^n\sum\limits_{k=1}^{n}J^k
&\leq c(1+\epsilon)^n\Delta t\sum\limits_{k=1}^{n}\frac 1{k^{\alpha^*}n^{1-\alpha^*}}+c(1+\epsilon)^n\Delta t\sum\limits_{k=1}^{n}\Delta x^2\\
&\leq c_4(1+\epsilon)^n\Delta t+c_5(1+\epsilon)^n\Delta x^2\\
&\leq c_6e^T\Delta t+c_7e^T\Delta x^2.
\end{align*}
We incorporate these estimates into \eqref{error6} to complete the proof.
\end{proof}

\section{Numerical results} \label{numerical-results}
In this section, we test some problems and present the numerical results to verify the effectiveness of the proposed  $RF$-$L1$ scheme  (\ref{RFS1})--(\ref{RFS3}) compared with $L1$ scheme (\ref{LS1})--(\ref{LS3}) and  $F$-$L1$ scheme (\ref{FS1})--(\ref{FS3}). All experiments are performed based on Matlab 2016b on a laptop with the configuration: Intel(R) Core(TM) i7-7500U CPU 2.70GHz and 8.00 GB RAM.

\begin{example}\label{example1}
To verify the efficiency of the robust fast algorithm for the VO Caputo fractional derivative, we first solve an ordinary differential equation
\begin{align*}
&\frac{\partial}{\partial t}u(t)+\zeta \prescript{C}{0}{\mathcal{D}}^{\alpha(t)}_tu(t)=1,\ \ t\in(0,T],\\
&u(0)=1,
\end{align*}
where $\zeta=1$, $T$=1 and the VO function is given by
\begin{align}\label{alpha}
\alpha(t)=\alpha(T)+\big(\alpha(0)-\alpha(T)\big)\bigg(1-t/T-\frac{\sin\big(2\pi(1-t/T)\big)}{2\pi}\bigg).
\end{align}
\end{example}
In the calculations, we use the numerical solutions $\widehat{U}$ to the corresponding problem discretized with $\Delta t=1/2^{22}$ as the reference solutions. Define the error and  the convergence rate in time by
\begin{eqnarray*}
E(\Delta t)=\left|u^n-\widehat{U}^n\right|,\ \ R_t=\log_2\frac{E(\Delta t)}{E(\Delta t/2)},
\end{eqnarray*}
respectively.

We set the expected accuracy $\epsilon=(\Delta t/T)^2$ to keep the accuracy of the solution of $RF$-$L1$ scheme as same as that of $L1$ scheme. The numerical results of $L1$ scheme, $F$-$L1$ scheme and $RF$-$L1$ scheme with different $\alpha(0)$ and $\alpha(T)$ are listed in Table \ref{T1}. For $\alpha_*=\alpha(0)=0.2$, Table \ref{T1} shows that compared with $L1$ scheme, the two fast algorithms $F$-$L1$ scheme and $RF$-$L1$ scheme greatly reduce the computational cost. The CPU time reveal $\mO(n\log^2 n)$ computational complexity of $F$-$L1$ scheme and $RF$-$L1$ scheme, and $\mO(n^2)$ computational complexity of $L1$ scheme, respectively. Although the computational complexity required in $F$-$L1$ scheme and $RF$-$L1$ scheme are both $\mO(n\log^2 n)$, the numbers of exponentials needed in $RF$-$L1$ scheme are very modest and not strongly influenced by $\alpha_*$, which indeed contributes to reduce the memory and computational cost. Moreover, as we have mentioned above, $F$-$L1$ scheme is not applicable for $\alpha_*$ of small value. Table \ref{T1} shows that $F$-$L1$ scheme cannot achieve the ideal convergence rate for $\alpha_*=0.05$ and even cannot work for $\alpha_*=0$. $RF$-$L1$ scheme always performs  well with few CPU time and memory.

\begin{example}\label{example2}
We investigate the temporal and spatial convergence behaviors of $RF$-$L1$ scheme. Consider \eqref{E1}--\eqref{E3} with the spatial domain $[x_l,x_r]=[0,1]$, the time interval $[0,T]=[0,1]$, $\zeta=1$, $p(x)=1$, $\varphi(x)=\sin(\pi x)$, $f=0$ and the VO function is given by \eqref{alpha}.
\end{example}
Set the expected accuracy $\epsilon=(\Delta t/T)^2$. We use the numerical solutions $\widehat{U}$ to the corresponding tFDE models discretized with $\Delta x=(x_r-x_l)/2^{10}$ and $\Delta t=T/2^{18}$ as the reference solutions. Define the error, the convergence rate in time and in space by
\begin{eqnarray*}
E(\Delta x,\Delta t)=\max_{0\leq j\leq m}\left|u_j^n-\widehat{U}_j^n\right|,\ \ R_t=\log_2\frac{E(\Delta x,\Delta t)}{E(\Delta x,\Delta t/2)},\ \ R_s=\log_2\frac{E(\Delta x,\Delta t)}{E(\Delta x/2,\Delta t)},
\end{eqnarray*}
respectively.

The error and temporal convergence order of $L1$ scheme, $F$-$L1$ scheme and $RF$-$L1$ scheme with different $\alpha(0)$ and $\alpha(T)$ are listed in Table \ref{T2}.  Fine spatial size is fixed at $\Delta x=(x_r-x_l)/2^{10}$. $F$-$L1$ scheme and $RF$-$L1$ scheme achieve the same accuracy as $L1$ scheme when $\alpha_*=\alpha(0)=0.2$. Compared with $L1$ scheme, $F$-$L1$ scheme greatly save the computational cost, and $RF$-$L1$ scheme further reduces CPU time and memory since much less $N_{\epsilon}$ is needed. Besides, $F$-$L1$ scheme  fails to solve the problem with $\alpha_*=0$ and is not very effective  for $\alpha_*=0.05$. Moreover, $RF$-$L1$ scheme is valid for $\alpha(0)=0$, $0.05$ and save much computational cost.

Table \ref{T3} lists the spatial convergence order of $L1$ scheme, $F$-$L1$ scheme and $RF$-$L1$ scheme with $\alpha(0)=0.05$ and $\alpha(T)=0.5$. Fine temporal step is fixed at $\Delta t=T/2^{18}$ and spatial sizes refined from $\Delta x=(x_r-x_l)/2^3$ to $\Delta x=(x_r-x_l)/2^7$. It shows that three schemes achieve the second-order convergence in space. Nevertheless, CPU time and memory of $RF$-$L1$ scheme are cheaper than those of $L1$ scheme and $F$-$L1$ scheme.

{\linespread{1.6}
\begin{table}[H]
\centering
\caption{Temporal convergence rates and the CPU time, memory of $L1$ scheme, $F$-$L1$ scheme and $RF$-$L1$ scheme for Example \ref{example1} }
\label{T1}
\resizebox{\textwidth}{!}{%
\begin{tabular}{cccccrccccrcrcccrcr}
\toprule
  & &&\multicolumn{4}{c}{$L1$ scheme}&&\multicolumn{5}{c}{$F$-$L1$ scheme}&&\multicolumn{5}{c}{$RF$-$L1$ scheme}\\
\cline{4-7}\cline{9-13}\cline{15-19}
$\big(\alpha(0),\alpha(T)\big)$  &$n$ &&$E(\Delta t)$&$R_t$&CPU(s)&Memory &&$E(\Delta t)$&$R_t$&CPU(s)&Memory&$\widetilde{N}_\epsilon$&&$E(\Delta t)$&$R_t$&CPU(s)&Memory&$N_\epsilon$\\
\midrule
                        &$2^{13}$   &&2.1281e-5 &-     &3.53     &1.97e+5  &&-         &-     &-     &-        &-     &&2.1281e-5 &-    &0.26  &4.90e+3 &98      \\
                        &$2^{14}$   &&1.0620e-5 &1.00  &12.69    &3.93e+5  &&-         &-     &-     &-        &-     &&1.0619e-5 &1.00 &0.50  &5.58e+3 &112     \\
$(0,0.2)$
                        &$2^{15}$   &&5.2890e-6 &1.00  &46.27    &7.87e+5  &&-         &-     &-     &-        &-     &&5.2889e-6 &1.01 &1.06  &6.30e+3 &127     \\
                        &$2^{16}$   &&2.6237e-6 &1.01  &179.23   &1.57e+6  &&-         &-     &-     &-        &-     &&2.6236e-6 &1.01 &2.20  &7.06e+3 &143     \\
                        &$2^{17}$   &&1.2910e-6 &1.02  &733.52   &3.15e+6  &&-         &-     &-     &-        &-     &&1.2910e-6 &1.02 &4.02  &7.83e+3 &159     \\
\hline
                        &$2^{13}$   &&1.9849e-5 &-     &3.29    &1.97e+5   &&1.4301e-5 &-     &0.97  &5.55e+4  &1153  &&1.9849e-5 &-    &0.30  &4.76e+5 &95      \\
                        &$2^{14}$   &&9.9041e-6 &1.00  &11.84   &3.93e+5   &&3.5916e-6 &1.99  &1.82  &6.40e+4  &1329  &&9.9040e-6 &1.00 &0.52  &5.48e+5 &110     \\
$(0.05,0.5)$
                        &$2^{15}$   &&4.9329e-6 &1.01  &47.42   &7.87e+5   &&1.4914e-6 &1.27  &3.09  &7.31e+4  &1519  &&4.9327e-6 &1.01 &1.20  &6.10e+5 &123     \\
                        &$2^{16}$   &&2.4476e-6 &1.01  &187.85  &1.57e+6   &&9.8072e-8 &3.93  &6.65  &8.27e+4  &1720  &&2.4473e-6 &1.01 &2.24  &6.87e+6 &139     \\
                        &$2^{17}$   &&1.2051e-6 &1.02  &767.61  &3.15e+6   &&2.0321e-7 &-1.05 &14.18 &9.31e+4  &1935  &&1.2049e-6 &1.02 &4.58  &7.69e+6 &156     \\
\hline
                        &$2^{13}$   &&1.8761e-5 &-     &3.19    &1.97e+7   &&1.8754e-5 &-     &0.65  &1.54e+4  &317   &&1.8761e-5 &-    &0.50  &4.52e+5 &90      \\
                        &$2^{14}$   &&9.3607e-6 &1.00  &12.50   &3.93e+7   &&9.3583e-6 &1.00  &0.95  &1.77e+4  &365   &&9.3605e-6 &1.00 &0.66  &5.10e+5 &102     \\
$(0.2,0.6)$
                        &$2^{15}$   &&4.6624e-6 &1.01  &48.05   &7.87e+7   &&4.6601e-6 &1.01  &2.01  &2.02e+4  &416   &&4.6622e-6 &1.01 &1.31  &5.77e+5 &116     \\
                        &$2^{16}$   &&2.3139e-6 &1.01  &188.77  &1.57e+8   &&2.3137e-6 &1.01  &4.06  &2.28e+4  &471   &&2.3135e-6 &1.01 &2.65  &6.44e+5 &130     \\
                        &$2^{17}$   &&1.1399e-6 &1.02  &759.75  &3.15e+8   &&1.1398e-6 &1.02  &8.20  &2.56e+4  &529   &&1.1397e-6 &1.02 &4.59  &7.11e+6 &144     \\
\bottomrule
\end{tabular}}
\end{table}
}

{\linespread{1.6}
\begin{table}[H]
\centering
\caption{Temporal convergence rates and the CPU time, memory of $L1$ scheme, $F$-$L1$ scheme and $RF$-$L1$ scheme for Example \ref{example2} with $m=2^{10}$}
\label{T2}
\resizebox{\textwidth}{!}{%
\begin{tabular}{cccccrccccrcrcccrcr}
\toprule
  & &&\multicolumn{4}{c}{$L1$ scheme}&&\multicolumn{5}{c}{$F$-$L1$ scheme}&&\multicolumn{5}{c}{$RF$-$L1$ scheme}\\
\cline{4-7}\cline{9-13}\cline{15-19}
$\big(\alpha(0),\alpha(T)\big)$  &$n$ &&$E(\Delta x,\Delta t)$&$R_t$&CPU(s)&Memory &&$E(\Delta x,\Delta t)$&$R_t$&CPU(s)&Memory&$\widetilde{N}_\epsilon$&&$E(\Delta x,\Delta t)$&$R_t$&CPU(s)&Memory&$N_\epsilon$\\
\midrule
                        &$2^{11}$   &&6.5685e-6 &-     &17.34    &1.69e+7   &&-         &-    &-       &-        &-      &&6.5685e-6 &-    &1.41   &7.48e+5 &73      \\
                        &$2^{12}$   &&3.2568e-6 &1.01  &66.37    &3.37e+7   &&-         &-    &-       &-        &-      &&3.2568e-6 &1.01 &3.06   &8.46e+5 &85     \\
$(0,0.2)$
                        &$2^{13}$   &&1.6022e-6 &1.02  &284.29   &6.73e+7   &&-         &-    &-       &-        &-      &&1.6022e-6 &1.02 &6.26   &9.53e+5 &98     \\
                        &$2^{14}$   &&7.7515e-7 &1.05  &1170.43  &1.34e+8   &&-         &-    &-       &-        &-      &&7.7515e-7 &1.05 &14.14  &1.07e+6 &112     \\
                        &$2^{15}$   &&3.6171e-7 &1.10  &4599.13  &2.69e+8   &&-         &-    &-       &-        &-      &&3.6171e-7 &1.10 &35.79  &1.19e+6 &127     \\
\hline
                        &$2^{11}$   &&1.4465e-5 &-     &17.53    &1.69e+7   &&1.2593e-5 &-    &43.91   &7.03e+6  &837    &&1.4465e-5 &-    &1.11   &7.31e+5 &71      \\
                        &$2^{12}$   &&7.1687e-6 &1.01  &75.82    &3.37e+7   &&6.1145e-6 &1.04 &99.49   &8.28e+6  &989    &&7.1687e-6 &1.01 &2.22   &8.30e+5 &83     \\
$(0.05,0.5)$
                        &$2^{13}$   &&3.5253e-6 &1.02  &278.57   &6.73e+7   &&3.2392e-6 &0.92 &232.70  &9.63e+6  &1153   &&3.5253e-6 &1.02 &5.86   &9.29e+5 &95     \\
                        &$2^{14}$   &&1.7051e-6 &1.05  &1098.39  &1.34e+8   &&1.3753e-6 &1.24 &530.39  &1.11e+7  &1329   &&1.7051e-6 &1.05 &15.34  &1.05e+6 &110     \\
                        &$2^{15}$   &&7.9551e-7 &1.10  &4429.14  &2.69e+8   &&6.1283e-7 &1.17 &1211.36 &1.26e+7  &1519   &&7.9551e-7 &1.10 &34.56  &1.16e+6 &123     \\
\hline
                        &$2^{11}$   &&1.6780e-5 &-     &17.20    &1.69e+7   &&1.6777e-5 &-    &12.49   &2.05e+6  &231    &&1.6780e-5 &-    &0.99   &6.98e+5 &67      \\
                        &$2^{12}$   &&8.3079e-6 &1.01  &67.22    &3.37e+7   &&8.3049e-6 &1.01 &30.18   &2.38e+6  &272    &&8.3078e-6 &1.01 &1.99   &7.89e+5 &78     \\
$(0.2,0.6)$
                        &$2^{13}$   &&4.0826e-6 &1.02  &268.00   &6.73e+7   &&4.0822e-6 &1.02 &66.48   &2.75e+6  &317    &&4.0826e-6 &1.02 &4.30   &8.88e+5 &90     \\
                        &$2^{14}$   &&1.9736e-6 &1.05  &1073.83  &1.34e+8   &&1.9735e-6 &1.05 &152.53  &3.15e+6  &365    &&1.9736e-6 &1.05 &9.58   &9.86e+5 &102     \\
                        &$2^{15}$   &&9.2040e-7 &1.10  &4565.72  &2.69e+8   &&9.2026e-7 &1.10 &343.53  &3.57e+6  &416    &&9.2040e-7 &1.10 &23.17  &1.10e+6 &116     \\
\bottomrule
\end{tabular}}
\end{table}
}

{\linespread{1.6}
\begin{table}[H]
\centering
\caption{Spatial convergence rates and the CPU time, memory of $L1$ scheme, $F$-$L1$ scheme and $RF$-$L1$ scheme for Example \ref{example2} with $n=2^{18}$}
\label{T3}
\resizebox{\textwidth}{!}{%
\begin{tabular}{cccccrccccrcrcccrcr}
\toprule
  & &&\multicolumn{4}{c}{$L1$ scheme}&&\multicolumn{5}{c}{$F$-$L1$ scheme}&&\multicolumn{5}{c}{$RF$-$L1$ scheme}\\
\cline{4-7}\cline{9-13}\cline{15-19}
$\big(\alpha(0),\alpha(T)\big)$  &$m$ &&$E(\Delta x,\Delta t)$&$R_s$&CPU(s)&Memory &&$E(\Delta x,\Delta t)$&$R_s$&CPU(s)&Memory&$\widetilde{N}_\epsilon$&&$E(\Delta x,\Delta t)$&$R_s$&CPU(s)&Memory&$N_\epsilon$\\
\midrule
                        &$2^3$   &&9.2958e-4 &-     &5533.98   &1.89e+7   &&9.2955e-4 &-    &82.48   &2.33e+4  &2161    &&9.2958e-4 &-    &15.90   &4.20e+4 &172       \\
                        &$2^4$   &&2.3079e-4 &2.01  &7586.97   &3.57e+7   &&2.3076e-4 &2.01 &102.65  &3.72e+5  &2161    &&2.3079e-4 &2.01 &18.05   &5.40e+4 &172     \\
$(0.05,0.5)$
                        &$2^5$   &&5.7557e-5 &2.00  &11751.81  &6.92e+7   &&5.7527e-5 &2.00 &150.06  &6.51e+5  &2161    &&5.7557e-5 &2.00 &26.15   &7.79e+4 &172     \\
                        &$2^6$   &&1.4341e-5 &2.00  &20286.87  &1.36e+8   &&1.4311e-5 &2.01 &968.32  &1.21e+6  &2161    &&1.4341e-5 &2.00 &49.96   &1.26e+5 &172     \\
                        &$2^7$   &&3.5427e-6 &2.02  &37741.75  &2.71e+8   &&3.5132e-6 &2.03 &2204.64 &2.32e+6  &2161    &&3.5427e-6 &2.02 &63.77   &2.22e+5 &172     \\

\bottomrule
\end{tabular}}
\end{table}
}

\section{Concluding Remarks} \label{concluding-remarks}
In this paper, a robust fast algorithm is developed to approximate the VO Caputo fractional derivative, which can handle the cases of small or vanishing lower bound of the VO function. The method is applied to construct a fast finite difference scheme for the VO tFDEs. Moreover, the convergence is studied without any regularity assumptions of the true solution. Numerical tests are reported to show the effectiveness of the proposed scheme and confirm the theoretical findings.

In our future work, fast methods for high-order scheme to approximate the VO tFDEs with the initial weak singularity are meaningful to study.

\bibliographystyle{unsrt}

\end{document}